\definecolor{marin}{rgb}   {0.,   0.3,   0.7} 
\definecolor{rouge}{rgb}   {0.8,   0.,   0.} 
\definecolor{sepia}{rgb}   {0.8,   0.5,   0.} 
\definecolor{uuuuuu}{rgb}{0.26666666666666666,0.26666666666666666,0.26666666666666666}
\newcommand\N{\mathbb{N}}
\newcommand\Z{\mathbb{Z}}
\newcommand\R{\mathbb{R}}
\newcommand\C{\mathbb{C}}
\newcommand{\dd}{\mathrm{d}}
\newcommand{\enstq}[2]{\left\{#1~\middle|~#2\right\}}
\renewcommand{\Re}{\operatorname{Re}}
\renewcommand{\Im}{\operatorname{Im}}
\newcommand\pot{\mathrm{pot}}
\newcommand\rot{\mathrm{rot}}
\newcommand\diag{\mathrm{diag}}
\newcommand\Card{\mathrm{Card}}
\newcommand\argmin{\mathrm{argmin}}
\newcommand{\ps}{\mathrm{ps}}
\newcommand\exterior{\mathrm{ext}}
\newcommand\interior{\mathrm{int}}
\newcommand\err{\mathrm{err}}
\newcommand\GS{\mathrm{GS}}
\newtheorem{proposition}{Proposition}
\begin{document}

\begin{frontmatter}

\title{Numerical study of the Gross-Pitaevskii equation on a two-dimensional ring and vortex nucleation}

%
%
%

\author[1]{Quentin Chauleur\corref{cor1}}
\ead{Quentin.Chauleur@math.cnrs.fr}

\author[2]{Radu Chicireanu}

\author[3]{Guillaume Dujardin}

\author[2]{Jean-Claude Garreau}

\author[2]{Adam Rançon}

\cortext[cor1]{Corresponding author}

\affiliation[1]{organization={Univ. Lille, INRIA, CNRS, UMR 8523 – PhLAM – Laboratoire de
Physique des Lasers, Atomes et Molécules, F-59000},
addressline={Cité Scientifique},
postcode={59655},
city={Villeneuve-d'Ascq},
country={France}}

\affiliation[2]{organization={Univ. Lille, CNRS, UMR 8523 – PhLAM – Laboratoire de
Physique des Lasers, Atomes et Molécules, F-59000},
addressline={Cité Scientifique},
postcode={59655},
city={Villeneuve-d'Ascq},
country={France}}

\affiliation[3]{organization={Univ. Lille, INRIA, CNRS, UMR 8524 - Laboratoire Paul Painlevé F-59000},
addressline={Cité Scientifique},
postcode={59655},
city={Villeneuve-d'Ascq},
country={France}}

\begin{abstract}
We consider the Gross-Pitaevskii equation with a confining ring potential with a Gaussian profile. By introducing a rotating sinusoidal perturbation, we numerically highlight the nucleation of quantum vortices in a particular regime throughout the dynamics. Numerical computations are made via a Strang splitting time integration and a two-point flux approximation Finite Volume scheme based on a particular admissible triangulation. We also develop numerical algorithms for vortex tracking adapted to our finite volume framework.
\end{abstract}

\begin{keyword}

Bose–Einstein condensate \sep Quantum fluids \sep Splitting methods \sep Finite Volumes \sep Vortex tracking




\end{keyword}

\end{frontmatter}

\section{Introduction}

 We consider the time-dependent Gross-Pitaevskii equation 
\begin{equation} \label{gross_pitaevskii}
i \hbar \partial_t \Psi + \frac{\hbar^2}{2m} \Delta \Psi= g \mathcal{N}|\Psi|^2 \Psi + \mathcal{V} \Psi,
\end{equation}   
where $\hbar$ is Planck's constant divided by $2\pi$, $m$ is the atom mass, $g$ is the nonlinear coupling coefficient and $\mathcal{N}$ is the number of atoms. The potential $\mathcal{V}$ can refer to a stationary trapping potential (typically an harmonic or a Gaussian potential), or to an external forcing (rotation, stirring, etc.), or usually a combination of both. Finally, $\Psi$ denotes the complex macroscopic wave function.

Equation \eqref{gross_pitaevskii} stands as a fundamental model in order to describe the dynamics of Bose-Einstein condensates (BEC), a many-body quantum state of matter near absolute zero which can be described by a single wave function. This phenomenon was predicted almost a century ago, respectively by Satyendra Nath Bose \cite{bose1924} and Albert Einstein \cite{einstein1925}, and had only been recently observed in the context of atomic quantum gas experiments \cite{david1995,anderson1995}. Moreover, when set to rotation through different methods, BEC exhibit complex nonlinear phenomena, such as vortex nucleation and quantum turbulence, which have drawn more and more attention from the scientific community over the last decades~\cite{kevrekidis2004}. A quantum vortex can be defined as a topological defect where the rotational of the velocity, also called \textit{vorticity}, is non zero, whereas being equal to zero anywhere else.

From the mathematical point of view, the study of equation \eqref{gross_pitaevskii} falls into the scope of semilinear Schrödinger equation \cite{cazenave,carles2011}. These equations satisfy several conversation laws, such as the mass
\[ \mathcal{M}(t)=\int |\Psi|^2 = \mathcal{M}(0)   \]
and the following energy (when the potential $\mathcal{V}$ is a real time-independent function)
\[ \mathcal{E}(t)=\frac{\hbar^2}{2m} \int |\nabla \Psi|^2 +\frac{g\mathcal{N}}{2} \int |\Psi|^4 + \int \mathcal{V} |\Psi|^2 =\mathcal{E}(0), \]
where the first term corresponds to the kinetic energy, the second term to the energy of interaction between atoms in the mean field approximation (characterized by the constant~$g$), and the third term to the potential energy.

In this paper we are interested in the numerical simulations of the Gross-Pitaevskii equation, as well as the detection of vortex nucleation, in the particular case of a two-dimensional ring-shaped geometry motivated by experiments. In fact, the behavior of quantum fluids on ring-shaped geometry has drawn some attention in the recent years, see for instance the works \cite{ricardo2013} or \cite{ricardo2019} and references within. 

We briefly describe our numerical setting. We perform a two-order Strang Splitting integration in time as well as a two-point flux approximation Finite Volume scheme method based on a particular triangulation of our geometry. As an initial data for our dynamical simulation, we start from the ground state of equation \eqref{gross_pitaevskii}, which basically stands as the unique global minimizer of the energy under the mass constraint, and which requires a well-suited gradient descent method with adaptive step for its discrete approximation. We finally provide several post-processing algorithms: some are dedicated to the detection of vortices as well as the computation of their indices, and another one performs the decomposition of the wave function $\psi$ on the $L^2$-basis associated to the linear part of equation \eqref{gross_pitaevskii}.

Numerous simulation methods have been employed both in the physical and the mathematical literature for the computation of hte dynamics of BEC \cite{bao2013}. Without unrealistically trying to be exhaustive, we briefly list and compare a few of them in order to justify our approach:
\begin{itemize}
	\item For the space discretization, finite differences or spectral methods \cite{antoine2015,bao2013} on linear grids are commonly opposed to more elaborate Finite Elements methods on complex geometries \cite{duboscq2015,danaila2016,luddens2021}. We here perform an in-between by using a two-point flux approximation Finite Volume scheme, which is both relatively easy to implement and well-suited for the discretization of quantum fluids, inspired by the vast literature on Finite Volumes for diffusive models \cite{eymard2000}. Also note that the first author recently proved the convergence of such schemes on Delaunay-Voronoï meshes in the work \cite{chauleur2024FVGP}.
	\item  For the time integration of Gross-Pitaevskii equations, we can mention the survey \cite{besse2013}. The most popular methods are respectively based on splitting methods, Cranck-Nicholson schemes, relaxation schemes or exponential integrators. Splitting methods appear to be explicit and easy-to-implement, mass-preserving, unconditionally stable as well as memory and computationally efficient at the same time.
	\item For the computation of the ground state, there is a vast literature, starting from \cite{bao2004}, which considers \textit{normalized gradient flow} methods. These methods basically consist of a two part algorithm by first performing an imaginary time gradient descent method followed by a projection over the constrained finite dimensional manifold. They usually ensures a decreasing property of the associated discrete energy, but the only theoretical convergence result towards the ground state (even if performed in the restricted case of the cubic Schrödinger equation in one dimension starting near the ground state) seems to be a linearly implicit scheme developed in \cite{faou2018} to the best of the authors knowledge, which is the scheme we use in our simulations.
	\item For the post-processing computations around vortices, we adapt the algorithm detailed in \cite[Section 4]{dujardin2022} in the context of a two-dimensional triangulation. We also develop usual methods based on vorticity-like quantities of the quantum fluid in the finite volume framework. Let us finally mention the recent work \cite{danaila2023} where the authors use a different approach in a finite-element context.
\end{itemize}

This paper is organized as follows. In Section \ref{section_model}, by a change of variables we introduce the dimensionless Gross-Pitaevskii equation which we will use for all our ongoing computations. Section \ref{section_discretization} is devoted to both time and space discretization of this equation, as well as the introduction of a particular admissible triangulation for our Finite Volume method. In Section \ref{GS_section}, we describe the \textit{normalized gradient flow} method we will use for the numerical computation of the ground state. We develop in Section \ref{section_post_processing} the two post processing algorithms described above. We then present numerical results in Section \ref{simulations_section} which corroborate theoretical announcements as well as display vortex nucleation for a particular set of parameters. Note that all codes are available on the \textsc{Gitlab} page \url{https://plmlab.math.cnrs.fr/chauleur/codes/-/tree/main/bose_einstein_condensates_ring}. 

\begin{figure}[h]
	\centering	
	\includegraphics[width=1\textwidth,trim = 0cm 0cm 0cm 0cm, clip]{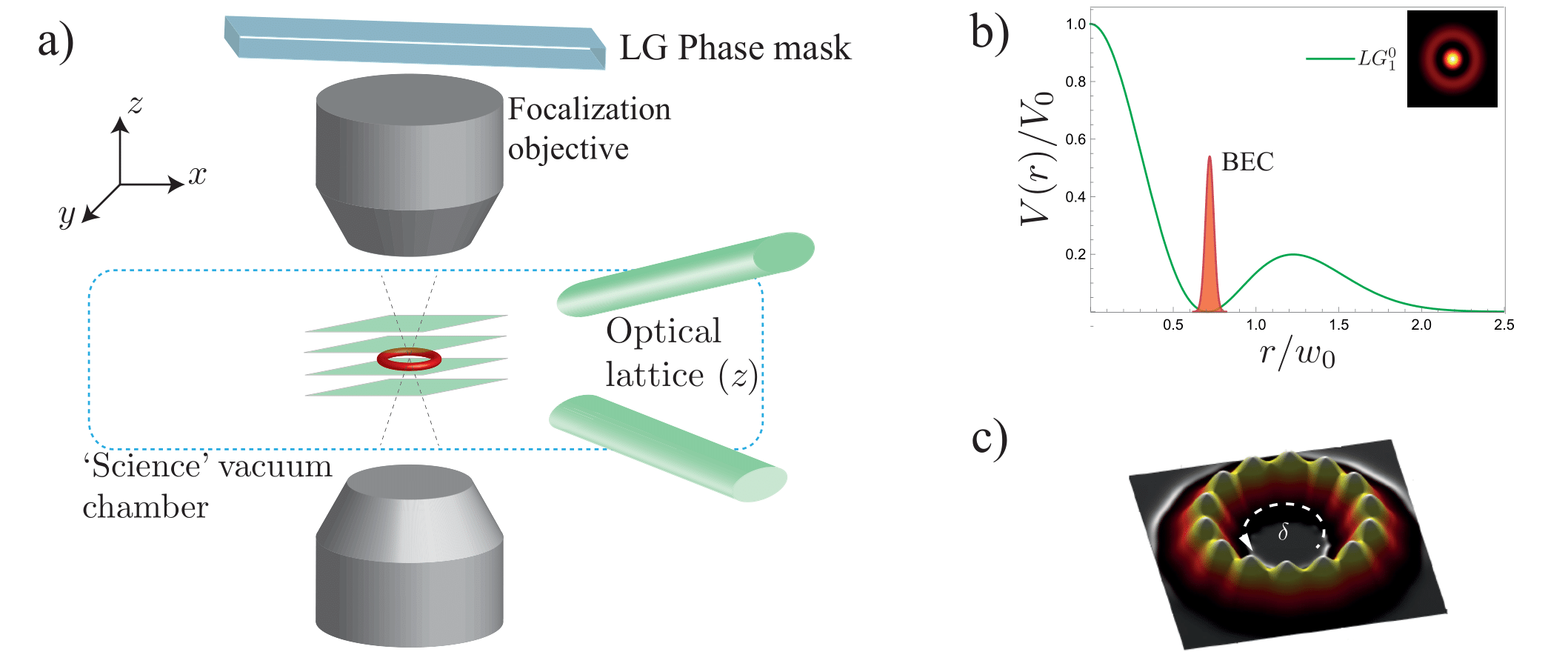}	
    \caption{Atomic BEC 2D ring trap configuration. a) Schematic representation of the setup. The LG laser beams (not shown) are focused down to the BEC (red) through high numeric-aperture optics. In the vertical direction, the BEC is trapped in a single node of an additional optical lattice, which `freezes' the motion of atoms along $z$, effectively reducing the system's dimensionality. b) Radial profile of the ring trapping potential, proportional to the intensity of the LG mode $L_1^0$ (inset). c) Rotating potential modulation used to stir the BEC into motion.}
\label{fig:cold_atoms_setup.png}
\end{figure}

\section{The Gross-Pitaevskii model} \label{section_model}

\subsection{Physical motivation}
The main aim of this work is to perform accurate numerical simulations which could be reproduced for quantum gas experimental setting at the PhLAM laboratory located at Villeneuve d'Ascq, schematically described in Figure~\ref{fig:cold_atoms_setup.png}.a). The quantum gas is expected to form and to evolve as a ring-shaped structure. A first important assumption, motivated by the experimental setup, is that the dynamics of the Gross-Pitaevskii model in our setting will be confined to a region where the height, namely the third coordinate $z$, will be irrelevant, which can be experimentally achieved through tight confinement of the atoms along this direction.

For the experimental setup, we consider a BEC of potassium atoms (isotope $^{39}$K), which allow to tune the strength of the atom-atom interactions by applying a external magnetic field (using the so-called `Feshbach resonances'~\cite{Chin2010}). In such setups the gas can be confined and manipulated using `optical dipole potentials', created by off-resonant laser light, where the potential is proportional to the local light intensity. The intensity profile of the laser beams can be conveniently shaped to create different potential landscapes, and even to modify the `effective dimension' of the gas. Here, we will consider (\textit{i}) a ring-shaped trapping potential, which defines the geometry and dimension (2D) of the problem and (\textit{ii}) a rotating periodic ring modulation, which will be used to set the trapped BEC into motion.

The specific values of the physical constants appearing throughout this paper will be of a particular importance, related to the experiment, and their physical relevance in a practical context will be systematically pointed out and discussed. Note that a full parametric study on the influence of these constants for the nucleation of vortices and quantum turbulence is beyond the scope of this paper. An experimental study of this particular model in order to corroborate our theoretical results is also a task that we plan to tackle in the future. 

\subsubsection{Stationary 2D trapping ring potential}

The 2D ring potential can be created using Laguerre-Gaussian (LG) laser beams~\cite{Allen1992}, which can be realized in practice using phase masks~\cite{Allen1992,Rhodes2006}. For a beam propagating along $z$, the transverse ($x_1 x_2$) electric field profile near the focal point is: $\mathcal{E}_{\ell,p}(r,\theta) \propto L_p^{|\ell |}(2 r^2/w_0^2) \exp(-i\ell\theta)$. Here, $r$ and $\theta$ are the polar coordinates in the $x_1 x_2$ plane, $w_0$ is the waist of the beam (typically a few microns), and the indices $p\in\mathbb N$ and $\ell\in\mathbb Z$ of the generalized Laguerre polynomial $L_p^\ell$ can be chosen to achieve the desired $x_1 x_2$ trapping potential. The beam intensity ($I\propto |\mathcal{E}_{\ell,p}|^2$) has cylindrical symmetry with respect to the $z=0$ axis, and presents one or several radial extrema, according to $\ell$ and $p$. For this work, the $L_1^0$ mode appears promising, as its radial profile presents two light rings, with an intermediate `dark' region, see Figure~\ref{fig:cold_atoms_setup.png}.b), where atoms will be trapped using a blue-detuned laser. The ring radius, corresponding to the potential minimum, is $r_0=w_0/\sqrt{2}$.

The confinement along the LG beam’s propagation axis ($z$) is very loose, as the LG beam intensity near the focal point (where the BEC is trapped) is almost uniform along $z$. Trapping along this longitudinal direction can be achieved by adding a tightly-confining ``optical lattice'', i.e. a sinusoidal potential obtained with an additional optical standing wave. The BEC can be loaded in the ground state of a single site of this lattice~\cite{ville2017}. Provided all relevant energy scales considered in this study are lower than the difference between the ground and the first excited state of the lattice site (in the harmonic approximation), the dynamics along $z$ will be effectively \textit{frozen}, and this physical situation can be treated as an `effective' 2D problem~\cite{ville2017}. This situation will be assumed throughout the rest of the paper.

Note that strong directional confinement is a common technique in atomic BEC experiments to reduce the effective dimensionality of inherently physically-3D systems~\cite{Gorlitz2001,Hadzibabic2004,Kohl2005}. From a numerical perspective, this allows us to only work on a two-dimensional set $\Omega \subset \R^2$. Although obviously not perfectly fitting the reality, this assumption appears to be pretty effective from the computational cost perspective.

\subsubsection{Rotating potential modulation}

To study vortex nucleation, one needs in practice to `stir' the 2D ring-trapped BEC~\cite{Wright2013,Amico2021}. The method which we propose in this paper is to use a rotating azimuthal potential modulation (along $\varphi$), which can be achieved using additional LG laser beams. LG modes with $\ell\neq 0$ are known to carry an `orbital angular momentum' that can be transferred to the atoms. By superposing two counter-propagating LG modes with opposite azimuthal $\ell$-numbers, $L_p^\ell$ and $L_p^{-\ell}$~\cite{Ngcobo2013}, which have azimuthal phase dependence $\exp(\pm i \ell \theta)$. This creates a spatial modulation $\propto \sin(2\ell \theta)$ along the ring, see Figure~\ref{fig:cold_atoms_setup.png}.c). Moreover, by slightly changing the optical frequencies of the beams ($\omega$ and $\omega+\delta$), one obtains a rotating sinusoidal modulation $\propto \sin(2\ell\theta+\delta t)$, whose rotation velocity can be easily controlled experimentally via the parameter $\delta$. The modulation thus acts as a rotating periodic potential for the BEC, with a spatial period $\pi r_0/\ell$ which can be varied by changing $\ell$ (typically between 1 and 10~\cite{Ngcobo2013}) or the 1D ring's radius $r_0$.

\subsection{The dimensionless model} \label{model_section}
In order to computationally solve equations arising from quantum mechanics, it is common to work with a dimensionless equivalent of equation \eqref{gross_pitaevskii} so we do not have to worry about the effects of extremely small constants (for instance $\hbar \simeq 10^{-34} \text{J} \cdot\text{s}$) or huge ones (for instance $\mathcal{N} \simeq 10^4$ atoms) on our numerical simulations. To this end, we first need to describe the stationary potential $\mathcal{V}$, which is assumed to be a radial Gaussian potential of the form
\[ \mathcal{V}(r)= -\mathcal{V}_0 \exp \left(-\frac{(r-r_0)^2}{\Delta r^2} \right),  \]
where $r=\sqrt{x_1^2+x_2^2}$, $x=(x_1,x_2) \in \R^2$. Here the positive constants $\mathcal{V}_0$, $r_0$ and $\Delta r$ respectively denote the amplitude, the radius of trap and typical width of ring of the Gaussian potential $\mathcal{V}$, and they should be determined by the power and the geometry of the laser beam during the experiment, with typical values $\mathcal{V}_0 \simeq 3.14 \text{MHz}\times \hbar$.

If we denote by $n(r)=\mathcal{N}|\Psi|^2$ the atom local density, we have
\[ \mathcal{N}=\int_{\R^2} n(r) \dd r = \mathcal{N} \int_{\R^2}  |\Psi|^2 \dd r, \]
which implies the usual normalization $\int |\Psi|^2 \dd r = 1$. In two dimensions, $|\Psi|^2$ then has the dimension of the inverse of a surface, and as $g \mathcal{N} |\Psi|^2$ stands for the interaction energy, $g$ has the dimension of an energy times a surface. Hence the nonlinear coupling coefficient $g$ is then related to a unique atomic magnitude, the s-wave scattering length $a$ \cite{petrov2001}, through the relation
\[ g=\frac{\sqrt{8\pi} a}{ l_z} \frac{\hbar^2}{m},  \]
with $l_z$ the typical length of the gaz in the direction perpendicular to the plane. For example for potassium, we have $a \simeq 5$~nm and $m=6.5 \times 10^{-26}$~kg, as well as $l_z \simeq 0.5$ \textmu m, which gives a typical dimensionless value of $mg/\hbar^2 \simeq 0.05$ for 2D quantum gases.
We are now going to write the Gross-Pitaevskii equation \eqref{gross_pitaevskii} in terms of dimensionless quantities (pointed out by a $'$), choosing for  the energy unite $\hbar \omega_c =\frac{\hbar^2}{2m \Delta r^2}$, and for the length unite $r_0$, so $r'=r/r_0$. This implies the normalization condition
\[ 1= \int_{\R^2}|\Psi|^2 \dd r = \int_{\R^2} r_0^2 |\Psi|^2 \dd \left( \frac{r}{r_0^2} \right)=\int_{\R^2} |\psi|^2 \dd r'   \]
so $|\psi|^2=r_0^2|\Psi|^2$. Furthermore, dividing equation~\eqref{gross_pitaevskii} by $\hbar \omega_c$, we get
\[i\frac{\partial \psi}{\partial ( \omega_c t)}+\frac{\hbar}{2m \omega_c r_0^2} \Delta_{x'} \psi  = \frac{g\mathcal{N}}{\hbar \omega_c r_0^2}|\psi|^2 \psi + \frac{\mathcal{V}}{\hbar \omega_c} \psi,   \]
which can be written under the form
\[ i \frac{\partial \psi}{\partial t'} + \frac{1}{2m'} \Delta_{x'} \psi= \gamma |\psi|^2 \psi + V \psi,  \]
with effective variables
\[ t'=\omega_c t, \quad m'= \frac{r_0^2}{2 \Delta r^2}, \quad V=\frac{\mathcal{V}}{\hbar \omega_c} \quad \text{and} \quad \gamma=\frac{g\mathcal{N}}{\hbar \omega_c r_0^2}. \]
In particular the trapping potential wan be rewrite in dimensionless unite by
\[ V(r)=-\frac{\mathcal{V}_0}{\hbar\omega} \exp \left(-\frac{(r-r_0)^2}{\Delta r^2} \right) =-V_0 \exp \left(-2m' (r'-1)^2 \right).   \]
Dropping the $'$ from now on, we are thus brought back to the study of the dimensionless Gross-Pitaevskii equation:
\begin{equation} \label{GP} 
i \partial_t \psi + \frac{1}{2m} \Delta \psi= \gamma |\psi|^2 \psi + V\psi,
\end{equation}
with an initial condition $\psi(0,\cdot)=\psi_0$, and a positive effective mass $m>0$ and nonlinear interaction constant $\gamma >0$. In this setting, the potential $V=V(t,x)$ is composed of a time-independent trapping potential $V_{\pot}$ and a time rotating potential $V_{\rot}$, namely
\[  V= V_{\pot} +  V_{\rot},  \]
with 
\[ V_{\pot}(r)=-V_0 \exp \left( -2m(r-1)^2 \right), \quad V_0 >0,  \]
and 
\begin{equation} \label{forcing_potential_definition}
V_{\rot}= V_p V_{\pot}(r) \sin(n_{\theta}\theta-\omega t))
\end{equation}  
where $ 0 \leq V_p \leq 1$, $n_{\theta} \in \N^*$ and $\omega \in \R$. As explained before, this form of potential can be obtained by making an interference pattern of two LG laser beams with opposite azimuthal dependences $\exp(\pm i \ell \theta)$. We finally emphasize that at first order near $r \simeq 1$ the Gaussian potential $V_{\pot}$ corresponds to a harmonic potential with radial symmetry of the form
\[ V_{\pot}(r) \simeq   V_0 (2 m(r-1)^2-1).  \]

\subsection{Assumption on the geometry}
We now wish to simulate the dynamics of the condensate on a ring-shaped domain of the form :
\begin{equation*} \label{ring_geometry}
 \Omega := \enstq{x \in \R^2}{ r_{\min} \leq |x| \leq r_{\max}  }, 
 \end{equation*}
with $0<r_{\min}<1<r_{\max}$. Furthermore, in order to solve equation \eqref{GP}, we also impose some Dirichlet boundary conditions to equation \eqref{GP}, namely
\begin{equation} \label{Dirichlet_conditions}
	\psi(\cdot,x)=0, \quad \forall x \in \partial \Omega.
\end{equation} 
Note that from a physics experiment point of view, particles hitting the edge of $\Omega$ with outwards velocity leave the domain $\Omega$. This phenomena may happen in this setting because we are providing the system with energy {\it via} the time-dependant potential $V_{\rot}$. This phenomena is not transcribed into our setting, since Dirichlet conditions \eqref{Dirichlet_conditions} imply some artificial reflection at the edge of the domain $\Omega$. This is an important task that we have to keep in mind during the simulations of Section \ref{simulations_section}.

\section{Discretization of the problem} \label{section_discretization}

\subsection{Time discretization by Strang splitting} \label{splitting_section}
Let $\tau >0$ be the time discretization step size. We will perform a semi-discretization in time with a Strang splitting method in order to compute the dynamic of equation~\eqref{GP} with initial condition $\psi(0,x)=\psi_0(x)$,~$x\in \Omega$. The operator splitting methods we consider for the time integration of \eqref{GP} are based on the following splitting
 \[  \partial_t \psi = A( \psi ) + B ( \psi ),   \]
where
\[ A( \psi) = \frac{1}{2m} i \Delta \psi, \quad B ( \psi ) =-i \gamma |\psi|^2 \psi  -i V \psi  \]
and the solutions of the following subproblems
\[ \left\{ \begin{aligned}
 &  i \partial_t u(t,x)=- \frac{1}{2m}  \Delta u(t,x), \quad &  u(0,x)=u_0(x),  \\    
&  i  \partial_t w(t,x)= \gamma |w(t,x)|^2 w(t,x) + V(t,x) w(t,x), \quad & w(0,x)=w_0(x),  
\end{aligned} \right. \]
 for all $x \in \Omega$ and $t>0$. The associated operators are explicitly given, for $t \geq 0$, 
\[ \left\{ \begin{aligned}
 & u(t+\tau,\cdot)=\Phi^{\tau}_A (u(t,\cdot)) = e^{i\frac{\tau}{2m}  \Delta} u(t,\cdot), \\
& w(t+\tau,\cdot)=\Phi^{\tau, t}_B (w(t,\cdot)) =  e^{- i \int_{0}^{\tau} V(t+s,\cdot) \dd s-i \tau \gamma |w(t,\cdot)|^2} w(t,\cdot).
\end{aligned} \right. \]
In particular, from the expressions of $V_{\pot}$ and $V_{\rot}$ given in Section \ref{model_section}, we have
\[ \Phi^{\tau,t}_B (v(t,\cdot)) = \exp \left(- i V(r) \left[ \frac{\tau}{2} + \frac{1}{\omega} \left(\cos(n_{\theta} \theta - \omega (t+\tau))- \cos(n_{\theta} \theta - \omega t ) \right) \right] \right).\]
The Strang splitting is then defined by the composition of flows
\begin{equation} \label{definition_strang_splitting}
  \psi(t+\tau,\cdot) = \Phi^{\tau,t}(\psi(t,\cdot)) :=  \Phi^{\frac{\tau}{2},t+\frac{\tau}{2}}_B \circ \Phi^{\tau}_A \circ \Phi^{\frac{\tau}{2},t}_B \left( \psi(t,\cdot)  \right), 
 \end{equation}
which provides a time-integration method of order 2 in $\mathcal{O}(\tau^2)$ (see for instance \cite{besse2002}). Note that we only evaluate the exponential of the Laplace operator once per iteration, as $\Phi_A$ will be the most expensive flow to compute (compared to the direct integration of the flow $\Phi_B$) by the method of finite volume that we are going to present. Also note that 
\[ \psi(t+2\tau,\cdot) = \Phi^{\frac{\tau}{2},t+\frac{3\tau}{2}}_B \circ \Phi^{\tau}_A \circ \underbrace{\Phi^{\frac{\tau}{2},t+\tau}_B \circ \Phi^{\frac{\tau}{2},t+\frac{\tau}{2}}_B}_{\Phi_B^{\tau,t+\frac{\tau}{2}}} \circ \ \Phi^{\tau}_A \circ \Phi^{\frac{\tau}{2},t}_B \left( \psi(t,\cdot)  \right) \]
so one can compute only once $ \Phi^{\tau,t}_B$ (in a similar way to the one-order Lie splitting) at each step for computational effectiveness.

\subsection{Space discretization by Finite Volumes}  \label{finite_volume_section}
In order to discretize the space variable $x \in \Omega$ in equation \eqref{GP}, we will perform a standard two-point flux approximation finite volume scheme of order 1 (see for instance~\cite{eymard2000} for a classical introduction to this method), based on a regular triangulation $\mathcal{T}$ of our domain $\Omega$. In particular, in this part we will assume the existence of such triangulation~$\mathcal{T}$, and we refer to Section \ref{triangulation_section} for further discussions about the triangulation we will adopt in our simulations.

As the Laplace operator $\Delta$ is the only component of equation \eqref{GP} involving space derivatives, we briefly recall how finite volume schemes actually discretize the following Laplace equation on $\Omega$ with Dirichlet conditions:
\begin{equation} \label{laplace_eq}
  \left\{
      \begin{aligned}
        -\Delta u(x) =f(x), &\quad x \in \Omega,\\
        u(x)=0, &\quad x \in \partial \Omega,
      \end{aligned}
    \right.
\end{equation}
where $f:\Omega \rightarrow \C$ denotes a regular function. Let $\mathcal{T}$ be a finite family of open triangular disjoints subsets of a polygonal approximation of $\Omega$, such that two triangles having a common edge have also two common vertices. Also assume that all angles of the triangles are less than $\pi/2$. This define an admissible mesh in the sense that there exists a family of points $(x_K)_{K \in \mathcal{T}}$, such that for any $K,L \in \mathcal{T}$ sharing a common edge denoted $K|L$, the points $x_K \in K $ and $x_L \in L$ satisfy the orthogonality condition
\[ \left[x_K,x_L \right] \perp K|L. \]
Integrating equation \eqref{laplace_eq} over a triangle $K \in \mathcal{T}$, we obtain
\[ |K| f_{K} := \int_{K} f = - \Delta \int_{K} u = \sum_{\sigma \in \mathcal{E}_{K}} \left( - \int_{\sigma} \nabla u \cdot \nu_{K, \sigma}  \right),  \]
where $\mathcal{E}_{K}$ denotes the set of the edges of the triangle $K$, and $\nu_{K, \sigma}$ the unit normal to the edge $\sigma$, outward to $K$. We also denote $\mathcal{E}$ the set of the edges defined by the triangular mesh $\mathcal{T}$, $\mathcal{E}_{\exterior} \subset \mathcal{E} $ the set of the edges located on the boundary of $\tilde{\Omega}$, and $\mathcal{E}_{\interior} = \mathcal{E} \backslash  \mathcal{E}_{\exterior}$. We denote $L \in \mathcal{T}$ the triangle such that $\sigma = K | L$, then we can approximate
\[ - \int_{\sigma} \nabla u \cdot \nu_{K, \sigma} \simeq - |\sigma| \frac{u_{L} - u_{K}}{d_{K,L}}  \]
if $\sigma \in \mathcal{E}_{\interior}$, where $d_{K,L}$ denotes the distance between the circumcenter $x_{K}$ of the triangle $K$ and the circumcenter $x_{L}$ of the triangle $L$, or 
\[ - \int_{\sigma} \nabla u \cdot \nu_{K, \sigma} \simeq  - |\sigma| \frac{ - u_{K}}{d_{K,\sigma}} \]
if $\sigma \in \mathcal{E}_{\exterior}$, as we impose some Dirichlet boundary conditions, and where $d_{K,\sigma}$ denotes the distance between the circumcenter $x_{K}$ of the triangle $K$ and the edge $\sigma$. Denoting $N=\Card(\mathcal{T})$ the number of triangles, we then look for a solution $U=\left(U_{K} \right)_{K \in \mathcal{T}} \in \R^N$ such that
\begin{equation*} 
  \left\{
      \begin{aligned}
        & \sum_{\sigma \in \mathcal{E}_{K}} \mathcal{D}_{K,\sigma} \left( U \right)= |K| f_{K}, &\quad \forall K \in \mathcal{T},\\
        & \mathcal{D}_{K,\sigma} \left( U \right)= |\sigma| \frac{ U_L-U_K}{d_{K,L}}  , &\quad \text{if} \ \sigma=K | L \in \mathcal{E}_{\interior},\\
        & \mathcal{D}_{K,\sigma} \left( U \right)=  - |\sigma| \frac{U_{K}}{d_{K,\sigma}} , &\quad \text{if} \ \sigma \in \mathcal{E}_{\exterior},
      \end{aligned}
    \right.
\end{equation*}
which define a system of $N$ linear equations with $N$ unknowns, which can be put into the matrix form
\[  A U = F,  \]
where $F=\left(|K| f_{K} \right)_{K \in \mathcal{T}} \in \R^N$. Note that the matrix $A$ is the discrete counterpart of the Laplace operator $\Delta$, and that by construction, the matrix $A$ is symmetric. Moreover, the matrix 
\[ A_{\mathcal{T}} := \left(\frac{1}{|K_i|} A_{i,j} \right)_{1\leq i,j \leq N} \]
is symmetric in the base of our triangulation $\mathcal{T}$, which means that defining the scalar product 
\begin{equation} \label{definition_scalar_product_triangulation}
\langle U,V \rangle_{\mathcal{T}} := \Re \left( \sum_{K \in \mathcal{T}} U_{K} \overline{V_{K}} |K| \right), \quad U,V \in \R^N, 
\end{equation} 
with respect to the triangulation $\mathcal{T}$, then
\[  \langle A_{\mathcal{T}}U,V \rangle_{\mathcal{T}} =  \langle U,A_{\mathcal{T}}V \rangle_{\mathcal{T}}.  \]

\subsection{Triangulation of the geometry} \label{triangulation_section}
We now want to generate an admissible mesh, with all angles less than $\pi/2$. One could rely on triangulations generated by mesh-generation software such as \textsc{GMSH} \cite{gmsh} with the "Delaunay" option, see Figure \ref{fig:gmsh_triangulation_15222_triangles.png}.

 \begin{figure}[h]
	\centering
		\includegraphics[width=0.8\textwidth,trim = 0cm 0cm 0cm 0cm, clip]{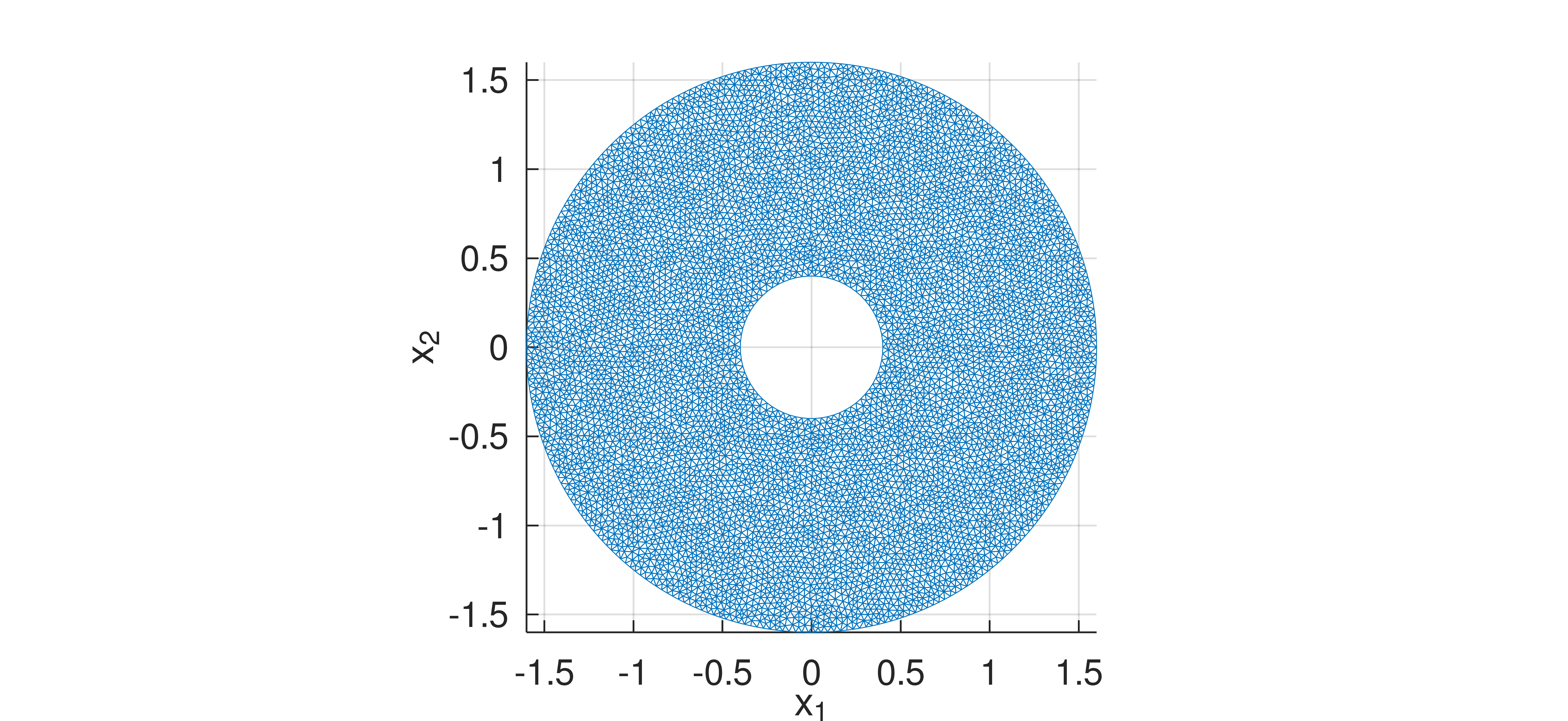}	
		\caption{GMSH triangulation of 15222 triangles with $r_{\min}=0.4$, $r_{\max}=1.6$ and \textit{element size factor} $h=0.08$.}
	\label{fig:gmsh_triangulation_15222_triangles.png}
\end{figure}

However, we will rather generate our own mesh taking into account two main features coming from our particular setting:
\begin{itemize}
\item We attend to observe most part of our dynamic at the "center" of the ring $\Omega$, which means the zone near the unit circle $\mathbb{S}^1$, so we would rather seek for non-uniform mesh grid. In fact, as we are looking for vortex points at the center part of the ring, which might be very localized zeros of the wave function $\psi$, we would like some mesh refinements at this specific area to detect more easily this phenomena. At the contrary, with Dirichlet conditions and a strong trapping potential $V_{\pot}$ (in the sense that $V_0 \gg 1$), we expect no dynamics at the edge of the ring $\Omega$, hence we would be satisfied by rough mesh size around $\partial \Omega$.
\item Taking $V_{\rot}=0$, we easily see that any solution of equation \eqref{GP} with radial initial condition $\psi_0(x)=\psi_0(r)$ inherits this radial symmetry property for any time $t \geq 0$, a feature that will would like to preserve as much as possible for the induced discrete scheme. Unfortunately, non-symmetric meshes instantly break this property, hence we will rather build a triangulation presenting some radial symmetry.
\end{itemize}
We now describe how we construct the symmetric concentrated mesh that we will use for our simulations in Section \ref{simulations_section}, providing the values of $r_{\min}$ and $r_{\max}$ of the ring $\Omega$, as well as the control of stepsize of the triangles $h>0$.

\subsubsection{Building the symmetric mesh}
We will first discretize the unit circle $\mathbb{S}^1$ by generating $N_p$ points $(x_k)_{1\leq k\leq N_p}$ ($N_p$ for "number of points") such that
\[ X_k=\left( \cos \left( \frac{2(k-1)\pi}{N_p} \right), \sin\left( \frac{2(k-1)\pi}{N_p} \right)   \right).   \]
We then perform $N_c$ dilatation of the vector $(X_k)_k$ with equidistant radius $(r_j)_{1\leq j \leq N_c}$ ($N_c$ for "number of circles") such that $r_1=r_{\min}$, $r_{N_p}=r_{\max}$ and for all $1\leq j \leq N_c-1$, 
\[ r_1 \leq \ldots \leq r_{N_c} \quad \text{and} \quad r_{j+1} - r_j = \frac{r_{\max}-r_{\min}}{N_c}.  \]
We denote by 
\[ x_k^j= \left( r_j \cos \left( \frac{2(k-1)\pi}{N_p} \right), r_j \sin\left( \frac{2(k-1)\pi}{N_p} \right)   \right)    \]
this collection of points.
For each point $x_k^j$ with $1 \leq k \leq N_p-1$ and $1 \leq j \leq N_c-1$, we can then define two triangles 
\[  t_{k,j}^1= \left( x_k^j, x_k^{j+1}, x_{k+1}^{j+1} \right) \quad \text{and} \quad  t_{k,j}^2= \left( x_k^j, x_{k+1}^j, x_{k+1}^{j+1} \right).  \]
The same way, you can define two triangles for $x_{N_p}^j$, namely,
\[  t_{N_p,j}^1= \left( x_{N_p}^j, x_{N_p}^{j+1}, x_1^{j+1} \right) \quad \text{and} \quad  t_{N_p,j}^2= \left( x_{N_p}^j, x_1^j, x_1^{j+1} \right).  \]
Unfortunately, triangles defined this way rapidly tends to have angles higher than $\pi/2$ as $N_p \rightarrow \infty$. In order to circumvent this problem, we define a little rotation of $-(j-1)\pi/N_p$ for the $(x_k^j)_k$ points with $1 \leq j \leq N_c$, which gives the new definition
\[ \tilde{x}_k^j= \left( r_j \cos \left( \frac{2(k-1)\pi}{N_p} -\frac{(j-1)\pi}{N_p} \right), r_j \sin\left( \frac{2(k-1)\pi}{N_p} -\frac{(j-1)\pi}{N_p} \right)   \right).    \]
We can then define the triangulation $\mathcal{T}=(\tilde{t}_{k,j}^l)_{1\leq k \leq N_p, 1 \leq j \leq N_c, l=1,2}$. 

\subsubsection{Concentrated symmetric mesh}

As we expect that a large part of the dynamics will be located at the middle portion of the ring $\Omega$ (roughly at $r \simeq 1$), we would like to build some refine mesh near this area. We now present how to choose the number of points per circle $N_p$ and the number of circles $N_c$ in order to have a concentrated mesh which satisfy the strict Delaunay condition. As we embed $N_c$ circles into $\Omega$, we look for a quadratic function
\[ f(x)=\alpha \left(x-\frac12 \right)^2+ \frac{r_{\max}-r_{\min}}{2N_c}, \quad 0\leq x \leq 1,   \]
with $\alpha$ to be fixed, such that for the radius $(r_k)_{1 \leq k \leq N_c}$ of these circles, we have $r_{k+1}-r_k=f(k/N_c)$ and the endpoints conditions $r_1=r_{\min}$ and $r_{N_c}=r_{\max}$. Here note that we impose the arbitrary condition that $r_{k+1}-r_k$ is bounded by below by $\frac{r_{\max}-r_{\min}}{2N_c}$, which corresponds to the minimal thickness of the embedded ring. From the condition
\[ r_{\max}-r_{\min}=\sum_{k=1}^{N_c} (r_{k+1}-r_k) = \alpha \sum_{k=1}^{N_c} \left(\frac{k}{N_c}-\frac12 \right)^2+ \frac{1}{2}  \]
we infer that
\[ \alpha= \frac{6 N_c}{N_c^2+2}(r_{\max}-r_{\min}).   \]
We now look for a minimum of points $N_p$ for every circles in order to ensure that every angle of our triangulation is strictly smaller than $\pi/2$.
\begin{proposition} \label{proposition_admissible_triangulation}
We fix a stepsize $h>0$. Let $N_c=\left\lceil\frac{r_{\max}-r_{\min}}{h} \right\rceil$ and 
\[ N_p = \left\lceil 2\pi\sqrt{2} \frac{N_c}{1-\frac{r_{\min}}{r_{\max}}} \right\rceil.   \]
Then the triangulation $\mathcal{T}$ satisfy the strict Delaunay condition.
\end{proposition}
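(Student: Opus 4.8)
The strict Delaunay condition here amounts to requiring that every triangle of $\mathcal{T}$ be strictly acute, i.e. that all its interior angles be strictly smaller than $\pi/2$; equivalently, that the circumcenter of each triangle lie strictly inside it, which is exactly what makes the mesh admissible with positive transmissibilities in the sense of Section~\ref{finite_volume_section}. The plan is therefore to bound all interior angles of all triangles $\tilde t_{k,j}^l$ away from $\pi/2$. The first step is a reduction by symmetry: since the points $(\tilde x_k^j)_{1\le k\le N_p}$ on the circle of radius $r_j$ are equispaced and each circle carries a rigid rotation, the rotation by $2\pi/N_p$ maps the whole construction to itself up to relabelling $k$. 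Hence, for fixed $j$ and fixed type $l\in\{1,2\}$, all triangles $\tilde t_{k,j}^l$ are congruent, and it suffices to control the angles of a single representative triangle for each of the finitely many pairs $(j,l)$ with $1\le j\le N_c-1$.

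Next I would write down the three vertices of a representative triangle explicitly in Cartesian coordinates and compute the three interior angles, for instance through the squared edge lengths and the law of cosines. The geometry is governed by two length scales: the radial gap $\delta_j:=r_{j+1}-r_j$ between consecutive circles, and the azimuthal arc length $r\,\Delta\theta$ with $\Delta\theta=2\pi/N_p$ on the circle of radius $r$. The crucial point, and the reason for the half-step rotation by $-(j-1)\pi/N_p$ introduced in the construction, is that this offset places the lone vertex of each triangle almost exactly above the midpoint of the opposite edge, making the triangle nearly isoceles; without the offset the apex angle would tend to $\pi$ as $N_p\to\infty$. I expect the two base angles to be automatically acute, so that the binding constraint is the apex angle, i.e. the angle at the single vertex lying on one circle and subtending the edge carried by the other circle. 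The most dangerous case is the triangle $\tilde t_{k,j}^1$, whose apex sits on the inner circle while its long edge lies on the outer circle of radius $r_{j+1}$.

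The apex angle is strictly acute precisely when the radial gap exceeds (a curvature-corrected version of) half the subtended arc, an inequality of the schematic form $\delta_j > \tfrac12\, r_{j+1}\,\Delta\theta$. I would then bound the right-hand side uniformly by its worst case — largest radius $r_{\max}$ and smallest radial gap, which for the concentrated spacing is $\delta_{\min}=(r_{\max}-r_{\min})/(2N_c)$ — and substitute $\Delta\theta=2\pi/N_p$ together with the stated value of $N_c$. Solving the resulting inequality for $N_p$ yields a threshold of the form $N_p > 2\pi\, N_c/(1-r_{\min}/r_{\max})$, and the prescribed choice $N_p=\lceil 2\pi\sqrt2\, N_c/(1-r_{\min}/r_{\max})\rceil$ clears this threshold, the extra factor $\sqrt2$ providing the margin that turns the inequality into a strict one. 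The main obstacle is to carry out the angle estimate with the \emph{exact} curved geometry rather than the flat approximation: the two vertices of each base lie on the same circle and the apex on a different one, so one must keep track of the chordal (not arc) lengths and of the fact that the offset is only approximately a midpoint, and then verify that the resulting inequality remains strict uniformly in $j$ after accounting for the ceilings in the definitions of $N_c$ and $N_p$ and for the non-uniform radial spacing $\delta_j$ of the concentrated mesh. This exact accounting is what fixes the constant $2\pi\sqrt2$.
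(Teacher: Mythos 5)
Your overall skeleton matches the paper's proof: reduce to the apex angle of the isosceles triangles, identify the inward-pointing ones (apex on the inner circle, base on the outer) as the binding case, and optimize over the worst radius $r_{\max}$ and the smallest radial gap $\delta_{\min}:=(r_{\max}-r_{\min})/(2N_c)$. But the central quantitative step, as you set it up, has a genuine gap. Your acuteness criterion is the height-based one: the apex angle is $<\pi/2$ if and only if the height exceeds half the base, encoded schematically as $\delta_j>\tfrac12 r_{j+1}\Delta\theta$. For the inward-pointing triangles --- the dangerous ones by your own identification --- the height equals $r_{j+1}\cos(\pi/N_p)-r_j=\delta_j-r_{j+1}\left(1-\cos(\pi/N_p)\right)$, which is \emph{strictly smaller} than the radial gap $\delta_j$: the curvature correction has the wrong sign for you. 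So substituting the worst case $(\delta_{\min},r_{\max})$ into the flat inequality does not establish acuteness; you must additionally prove that the factor $\sqrt{2}$ dominates the deficit, i.e.\ that $\sqrt{2}\,x\geq \sin x + 1-\cos x$ for $x=\pi/N_p$, uniformly in $j$ and compatibly with the ceilings. This is exactly the verification you defer as ``the main obstacle'', and along your route it is the actual content of the proof. (It does go through: one needs roughly $x\leq 2(\sqrt{2}-1)$, which holds since $N_p\geq 2\pi\sqrt{2}$; but the proposal stops short of it.)

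The paper's proof avoids this issue altogether, and that also corrects your reading of where the constant comes from. First, the triangles are \emph{exactly} isosceles, not approximately: the offset $-(j-1)\pi/N_p$ shifts consecutive circles by exactly half the angular step $2\pi/N_p$, so the apex lies exactly on the perpendicular bisector of the base; in particular the two base angles are automatically $<\pi/2$, and your concern about the offset being ``only approximately a midpoint'' is unfounded. Second, for the inward-pointing triangles the paper estimates the slant sides $a$ rather than the height: by the law of cosines for an isosceles triangle, $b^2=2a^2(1-\cos\theta)$, so the apex angle is acute if and only if $a/b>1/\sqrt{2}$; and the slant side, being a segment joining points on circles of radii $r_j<r_{j+1}$, satisfies $a\geq r_{j+1}-r_j=\delta_j$ exactly, with no curvature correction whatsoever. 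Combined with the chord bound $b<2\pi r_{j+1}/N_p$, this yields $N_p\geq \sqrt{2}\,\pi r_{\max}/\delta_{\min}=2\pi\sqrt{2}\,N_c/\left(1-r_{\min}/r_{\max}\right)$ in two lines. Thus in the paper the $\sqrt{2}$ is the exact constant of the acuteness criterion, not a safety margin absorbing curvature; the height criterion is reserved for the outward-pointing triangles, where the height genuinely dominates the radial gap and produces a strictly weaker constraint.
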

\begin{proof}
Let's first note that by construction, all the triangles $L$ of our triangulation $\mathcal{T}$ are isosceles, so the two equal angles are necessary lower than $\pi/2$. We denote $\theta_{L}$ the remaining angle, $a_{L}$ the two equal sides and $b_{L}$ the remaining side (or $\theta$, $a$ and $b$ when there is no risk of confusion). We now have to split our analysis between triangles pointing towards the point $(0,0)$ and triangles pointing towards the exterior of the domain $\Omega$. \\
We begin with triangles pointing towards the center of the ring. By Alkashi formula, we have
\[ b^2=2a^2(1-\cos \theta).   \]
As we want $\theta < \pi/2$ we need $\cos \theta >0$, so
\[ \cos \theta = 1 -\frac{b^2}{2a^2} \Rightarrow \frac{b^2}{2} < a^2,   \]
hence
\[ \theta < \pi/2 \Leftrightarrow \frac{a}{b} >\frac{1}{\sqrt{2}}.  \]
Hence we need a lower bound of $a$ and an upper bound of $b$. By classical geometry, we have the rough bounds 
\[a> r_{k+1}-r_k \quad \text{and} \quad b < \frac{2\pi r_{k+1}}{N_p}. \]
Hence we want that for all $1 \leq k \leq N_c$, 
\[ \frac{r_{k+1}-r_k}{\pi r_{k+1}} \geq \sqrt{2}   \]
by simplification. Reversing the inequality and taking the supremum over $k$, we need
\[ N_p \geq \pi \sqrt{2} \frac{\max_k(r_k)}{\min_k(r_{k+1}-r_k)}.   \]
We obviously have $\max_k(r_k)=r_{\max}$. On the other hand, we know that $r_{k+1}-r_k=f(k/N_p)$, so
\[ \min_k f(k/N_c)= (r_{\max}-r_{\min}) \min_k \left[ \frac{6N_c}{N_c^2+2} \left(\frac{k}{N_c} - \frac12 \right)+\frac{1}{2N_c}   \right] \geq \frac{r_{\max}-r_{\min}}{2N_c},   \]
which is achieved for $k=N_c/2$ if $N_c$ is even (and actually gives a lower bound if $N_c$ is odd). Hence we impose
\[ N_p \geq 2\pi \sqrt{2} \frac{r_{\max} N_c}{r_{\max}-r_{\min}}.   \]
In the case of triangles oriented outside the ring, we are going to use that
\[ \tan(\theta/2) = \frac{b}{2c},   \]
where $c$ denotes the height of the isosceles triangle, so we need an upper bound of $b$ (that we already compute) and a lower bound of $c$. An elementary geometric argument shows that in this case $ c > r_{k+1}- r_k $, hence we are back to the condition
\[  N_p> \pi \frac{\max_k (r_k)}{\min_k ( r_{k+1}-r_k)},  \]
which is less constraining than the previous one, hence the proposition is proved.
\end{proof}

\begin{proposition}
Let $\mathcal{T}$ be the triangulation defined by the above procedure, with $r_{\min}$, $r_{\max}$ and $h>0$ given. Then, following notations from Section \ref{finite_volume_section}, we have that for all edges $\sigma \in \mathcal{E}$, 
\[   |\sigma|=\mathcal{O}(h) \quad \text{as} \quad h \rightarrow 0 .\]
Moreover, for all triangles $K \in \mathcal{T}$, 
\[   |K|=\mathcal{O}(h^2) \quad \text{as} \quad h \rightarrow 0 .\]
\end{proposition}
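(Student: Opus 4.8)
The plan is to reduce both estimates to a uniform control of the two geometric scales built into the mesh: the radial increments $r_{k+1}-r_k$ between consecutive circles, and the angular separation $2\pi/N_p$ between consecutive points on a circle. Every edge of $\mathcal{T}$ is either a \emph{base} (a chord joining two points of the same circle) or a \emph{slanted side} (joining points of adjacent circles), and by the same observation used in the proof of Proposition~\ref{proposition_admissible_triangulation} each triangle is isosceles with two slanted sides $a$ and one base $b$. Once I show that both kinds of edge are $\mathcal{O}(h)$, the area estimate is immediate: for any triangle $K$ with edges $\sigma_1,\sigma_2,\sigma_3$ one has $|K|\le \tfrac12|\sigma_1|\,|\sigma_2|\le\tfrac12\bigl(\max_\sigma|\sigma|\bigr)^2$.

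First I would control the radial increments. Recall $r_{k+1}-r_k=f(k/N_c)$ with $f(x)=\alpha(x-\tfrac12)^2+\tfrac{r_{\max}-r_{\min}}{2N_c}$ and $\alpha=\tfrac{6N_c}{N_c^2+2}(r_{\max}-r_{\min})$. Since $N_c=\lceil(r_{\max}-r_{\min})/h\rceil\to\infty$ as $h\to 0$, we have $N_c\sim(r_{\max}-r_{\min})/h$ and hence $\alpha\sim 6h$. The function $f$ is an upward parabola on $[0,1]$, so its minimum is $\tfrac{r_{\max}-r_{\min}}{2N_c}=\mathcal{O}(h)$ and its maximum, attained at the endpoints $x\in\{0,1\}$, equals $\tfrac{\alpha}{4}+\tfrac{r_{\max}-r_{\min}}{2N_c}=\mathcal{O}(h)$. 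Thus $r_{k+1}-r_k=\mathcal{O}(h)$ uniformly in $k$.

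Next I would bound the edges by elementary planar trigonometry. The rotation by $-(j-1)\pi/N_p$ makes the angular gap between the endpoints of a slanted side equal to $\pi/N_p$, whereas a base joins two points of one circle separated by $2\pi/N_p$. For a base at radius $r_{j+1}$ the law of cosines gives $b=2r_{j+1}\sin(\pi/N_p)\le 2\pi r_{\max}/N_p$; substituting $N_p\ge 2\pi\sqrt2\,N_c r_{\max}/(r_{\max}-r_{\min})$ and then $N_c\ge(r_{\max}-r_{\min})/h$ yields the clean bound $b\le (r_{\max}-r_{\min})/(\sqrt2\,N_c)\le h/\sqrt2=\mathcal{O}(h)$. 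For a slanted side joining radii $r_j,r_{j+1}$ at angular gap $\pi/N_p$, the law of cosines gives $a^2=(r_{j+1}-r_j)^2+4r_j r_{j+1}\sin^2(\pi/(2N_p))$; the first term is $\mathcal{O}(h^2)$ by the previous step and the second is $\le r_{\max}^2(\pi/N_p)^2=\mathcal{O}(N_p^{-2})=\mathcal{O}(h^2)$, so $a=\mathcal{O}(h)$. Hence $|\sigma|=\mathcal{O}(h)$ for every edge, and the area bound follows from the inequality noted in the first paragraph.

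The only point requiring genuine care is the asymptotic bookkeeping around the two ceiling functions: one must verify that $N_c$ and $N_p$ both scale like $h^{-1}$ (and not faster) as $h\to 0$, so that $\alpha=\mathcal{O}(h)$ and $N_p^{-1}=\mathcal{O}(h)$ hold simultaneously. Since $N_c=\lceil(r_{\max}-r_{\min})/h\rceil\le (r_{\max}-r_{\min})/h+1$ and $N_p\le 2\pi\sqrt2\,N_c r_{\max}/(r_{\max}-r_{\min})+1$, both are $\Theta(h^{-1})$ and the $+\mathcal{O}(1)$ corrections coming from the ceilings are absorbed into the $\mathcal{O}$-constants without affecting the orders. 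I expect this to be the main (though modest) obstacle; the remaining steps are routine trigonometry.
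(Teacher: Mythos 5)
Your proof is correct and follows essentially the same approach as the paper's: control the radial increments $r_{k+1}-r_k=\mathcal{O}(h)$ via the quadratic profile $f$ and the definition of $N_c$, control the chords via the definition of $N_p$, and deduce the area bound from the edge bounds. The only (minor) difference is that you use the law of cosines on slanted sides, which treats inward- and outward-pointing triangles uniformly, whereas the paper splits into the two orientations and bounds the isosceles height $c$ by $r_{k+1}-r_k$ (resp. $2(r_{k+1}-r_k)$) before applying Pythagoras.
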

\begin{proof}
Going back to the notation of the proof of Proposition \ref{proposition_admissible_triangulation}, we first consider the case of an isosceles triangle $K$ (with two equal sides of length $a$ and one side of length $b$ of height length $c$) pointing out to the point $(0,0)$ and defined between radius $r_k$ and $r_{k+1}$. Let's first note that by definition of $N_p$,
\[ b \leq \frac{2\pi r_{k+1}}{N_p} \leq \frac{2\pi r_{\max}}{N_p} \leq \frac{h}{\sqrt{2}}.   \]
On the other hand, by Pythagore's formula,
\[ a^2 =c^2 +\frac{b^2}{4} \leq (r_{k+1}-r_k)^2 +\frac{h^2}{8},   \]
so as $r_{k+1}-r_k=f(k/N_c)$, we infer by definition of $N_c$ that
\begin{equation*}
\begin{aligned}
 r_{k+1}-r_k &  \leq  \frac{6 N_c}{N_c^2+2} (r_{\max}-r_{\min}) \left( \frac{k}{N_c} -\frac12 \right)^2 +  \frac{r_{\max}-r_{\min}}{2N_c} \\
 & \leq  \frac{r_{\max}-r_{\min}}{2} \left( \frac{3N_c}{N_c^2+2} + \frac{1}{N_c}  \right) \leq \frac{2(r_{\max}-r_{\min})}{N_c} \\
 &  \leq 2h,
 \end{aligned}
 \end{equation*}
 so we well get that $a$ and $b$ are $\mathcal{O}(h)$ as $h \rightarrow 0$. In order to deal with triangles pointing out outwards, we just use the fact that $c \leq 2(r_{k+1}-r_k)$ by elementary geometry, which gives the same result. The result on the area $|K|$ of the triangle $K$ naturally follows.
\end{proof}

We illustrate such a concentrated symmetric mesh in Figure \ref{fig: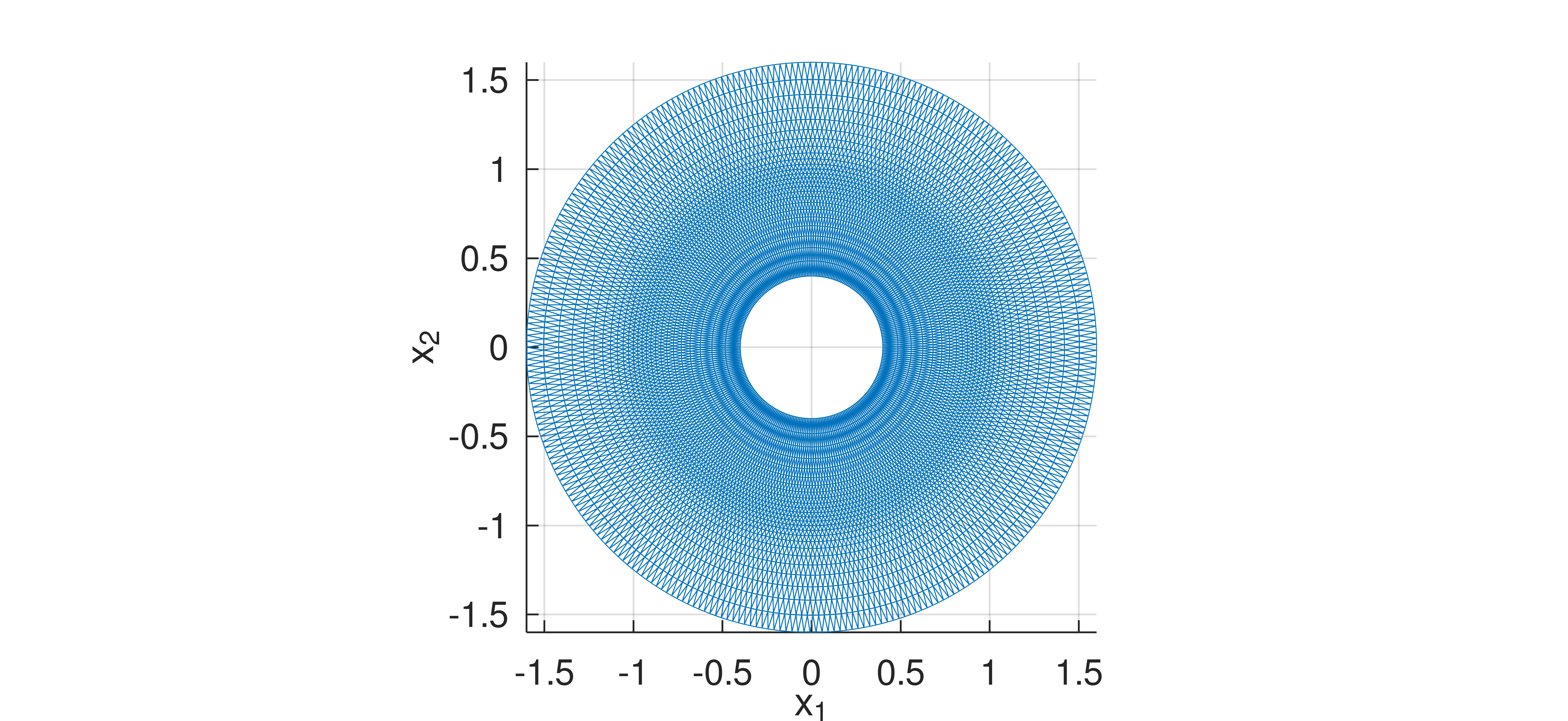}. In the following, we will denote by $N$ the number of triangles of the triangulation $\mathcal{T}$ generated through the above procedure, given a stepsize $h>0$.

 \begin{figure}[h]
	\centering
		\includegraphics[width=0.8\textwidth,trim = 0cm 0cm 0cm 0cm, clip]{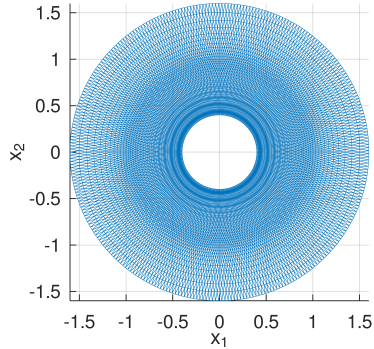}	
		\caption{Concentrated symmetric triangulation of $N=14850$ triangles with $r_{\min}=0.4$, $r_{\max}=1.6$, \textit{step size} $h=0.05$, a number of circles $N_c=25$ and a number of points per circle $N_p=297$. }
	\label{fig:concentrated_triangulation_14850_triangles.png}
\end{figure}

\section{Computation of the ground state} \label{GS_section}
We now aim at computing the ground state of equation \eqref{GP}, which is defined as the unique minimizer $\phi \in L^2(\Omega)$ of the energy
 \begin{equation*} \label{energy}  
E(\psi)= \int_{\Omega} \left(\frac{1}{2m} \left| \nabla \psi(t,x)  \right|^2 + V_{\pot}(x) |\psi(t,x)|^2 + \frac{\gamma}{2} |\psi(t,x)|^4 \right) \dd x 
 \end{equation*} 
of the Gross-Pitaevskii equation \eqref{GP} under the mass constraint
 \[M(\psi)= \int_{\Omega} |\psi(t,x)|^2 \dd x =1 .\]
 This ground state solution (or more precisely its discrete approximation) will then serve as our initial state function $\psi_0(x)=\phi(x)$ in our simulations. Hence, in the case without rotation ($\omega=0$), the simulation of the dynamics should remain near the initial state $\phi$ in the $L^2(\mathcal{T})$ norm, which will constitute a first criterion concerning the numerical stability of our scheme. 
 
 Since a ground state $\varphi$ is defined as the solution of the minimization
   problem above with the mass constraint, we may write the corresponding Euler-Lagrange equation:
 There exists $\mu\in\R$ such that
\begin{equation*} \label{nonlinear_eigenvalue_problem}
  \left\{
      \begin{aligned}
 \mu \varphi(x) =  - \frac{1}{2m}\Delta \varphi(x) + V(x) \varphi(x) + \gamma |\varphi(x)|^2 \varphi(x), & \quad x \in \Omega,\\
       \varphi(x)=0, & \quad x \in \partial \Omega, \\
        \int_{\Omega} |\varphi(x)|^2 \dd x = 1 . &
      \end{aligned}
    \right.
  \end{equation*}

We adopt the same minimization strategy in the discretized context, defining the discrete counterpart of the energy $E$ for a vector $U^n \in \C^N$, namely
\[ E_{\mathcal{T}}(U)= -\frac{1}{2m} \langle U, A_{\mathcal{T}} U \rangle_{\mathcal{T}} + \langle U, V_{\mathcal{T}} U \rangle_{\mathcal{T}} + \frac{\gamma}{2} \langle U, |U|^2 \cdot U \rangle_{\mathcal{T}},   \]
which is associated with the discrete gradient 
\[ \nabla_U E_{\mathcal{T}} (U) = -\frac{1}{m} A_{\mathcal{T}} U +2 V_{\mathcal{T}} \cdot U + 2 \gamma |U|^2 \cdot U.   \]
Note here that the vector $V_{\mathcal{T}}\in \C^N$ designs the vector of values taken by the potential $V$ at each circumcenter of the triangulation $\mathcal{T}$. Also, the vector product~"$\cdot$" has here to be understood pointwise, as well the modulus $|U|^2=U\cdot \overline{U}$. We then perform a semi-implicit imaginary time discretization using an explicit Euler method for the laplacian part, based on a recent convergence result proved in \cite{faou2018}, and that can be written under the form
\begin{equation} \label{normalized_gradient_flow_splitting}
  \left\{
      \begin{aligned}
        & \frac{U^{n+\frac{1}{2}}-U^n}{\kappa} = \frac{1}{m} A_{\mathcal{T}} U^{n+\frac12} - 2V_{\mathcal{T}} \cdot U^{n+\frac{1}{2}} - 2\gamma |U^n|^2 U^{n+\frac{1}{2}}, \\
       & U^{n+1}= \frac{U^{n+\frac{1}{2}}}{\| U^{n+\frac{1}{2}}\|_{L^2(\mathcal{T})}},
      \end{aligned}
    \right.
\end{equation}
with a small stepsize $\kappa >0$ and starting from some initial data $U^0$. At each iteration, we check if
\[ E_{\mathcal{T}} \left(U^{n+1} \right) < E_{\mathcal{T}} \left( U^{n} \right).  \]
If it is, we move on, but if it is not, we come back to $U^{n}$ and we put $\kappa\leftarrow \kappa/2$.  The algorithm stops at step $n \in \N$ as soon as the condition 
\begin{equation} \label{stopping_criterion}
\mathcal{C}_{\mathcal{T}}(U^n) := \left\| \nabla_U E_{\mathcal{T}} (U^n) - \langle  \nabla_U E_{\mathcal{T}} (U^n), U^n \rangle_{L^2(\mathcal{T})}  U^n  \right\|_{L^2(\mathcal{T})} \leq \epsilon
\end{equation}  
is fulfilled for some vector $U^n$, and for a fixed given threshold $\epsilon >0$.

For a rigorous proof about the fact that equation \eqref{stopping_criterion} defines a stopping criterion (typically that $\mathcal{C}_{\mathcal{T}}(U) \rightarrow 0$ as $U\rightarrow U^*$, where $U^*$ designs a local minimum of the energy $E_{\mathcal{T}}$), we refer to the proof of \cite[Proposition 3]{dujardin2022}, which essentially relies on the fact that if the discrete energy~$E_{\mathcal{T}}$ admits a local minimum $U^*$ on the sphere, the function 
\[ f : t \mapsto E_{\mathcal{T}} \left( \frac{U^*+t U}{\| U^*+t U \|_{L^2(\mathcal{T})}}  \right)  \]
is smooth and has a null derivative at $t=0$.

\section{Post-processing algorithms} \label{section_post_processing}

\subsection{Vortex detection} \label{subsection_vortex_detection_post}
We describe the algorithm for the detection of vortices and the computations of their indices, adapting the cartesian-grid method of \cite[Section 4.1]{dujardin2022} on a triangulation. The algorithm relies on 3 numerical parameters $tol_1>0$, $tol_2>0$ and $\lambda_{\max}\in \N^*$. It follows the four steps below, where the three first steps identify the vortices' centers and the last step computes the vortices indices. In this section, $\psi$ denotes the square complex matrix with $N^2$ entries (where we recall that $N$ denotes the number of triangles on our triangulation) of the Bose-Einstein condensate.
\begin{itemize}
	\item \underline{Step 1:} We determine the potential centers of the vortices and establish a list of candidates called \textbf{potential\_vortices} such that $|\psi(n)|^2<tol_1$.
	\item \underline{Step 2:} We build a second list $\mathbb{P}$ based on the first list above using the following rule. For
each potential vortex number $n$ in the list establish in Step 1, we consider the values of $|\psi|^2$ on the adjacent triangles (let's say triangles that are at distance $\lambda=1$, denoted $\mathbb{S}_{\lambda}(n)$). If the values of $|\psi|^2$ at all points	of these adjacent triangles are such that $|\psi|^2-|\psi(n)|^2 > tol_2$, then we add the vortex center $n$ to the second list $\mathbb{P}$, and we set $\lambda=1$ as the characteristic length of the potential vortex. If not, we repeat this procedure for triangles at distance $\lambda=2,\ldots,\lambda_{\max}$. To summarize, we have determined a list $\mathbb{P}$ of points $n$ satisfying the
conditions
\[ |\psi(n)|^2 < tol_1 \quad \text{and} \quad |\psi(j)|^2 > |\psi(n)|^2 + tol_2   \]
for all triangles $j\in \mathbb{S}_{\lambda}(n)$ at distance $1 \leq \lambda \leq \lambda_{\max}$.
	\item \underline{Step 3:} We consider each vortex center from the list $\mathbb{P}$ and we identify if another center is
inside the set $ \bigcup_{\lambda=1}^{\lambda_{\max}} \mathbb{S}_{\lambda}(n)$. If this is the case, we eliminate the center with the biggest $|\psi|^2$ from the list $\mathbb{P}$. We repeat this step until we are left with isolated centers.
	\item \underline{Step 4:} We compute the indices using the following rules. For each $n \in \mathbb{P}$, we compute the angles $\omega(j) \in \left[-\pi,\pi \right]$ formed between the circumcenter of the triangle $n$ and the circumcenters of the triangles $j \in \mathbb{S}_{\lambda}(n)$ and the abscissa line. We then resort these angles in increasing order. We first compute the angle $\theta_0=\arg(\psi(j))$ associated to the triangle $j$ of lowest angle. After computing the first angle $\theta_0$, we proceed to compute the other angles $\theta_1,\theta_2,\ldots,\theta_M$, with $M(n)=\Card(\mathbb{S}_{\lambda}(n))$, the following way:
	\begin{itemize}
		\item After computing the angle $\theta_m$, the next angle $\tilde{\theta}_{m+1}$ is computed as an argument of the next value of $\psi$ on the set $\mathbb{S}_{\lambda}(n)$ with anticlockwise rotation.
		\item We set $\theta_{m+1}:=\tilde{\theta}_{m+1}+2k\pi$ with $k=\argmin_{l \in \Z} |\tilde{\theta}_{m+1}-\theta_m+2\pi l  |$.
	\end{itemize}
	Eventually, the index of the vortex $n$ is equal to $(\theta_{M(n)}-\theta_0)/2\pi$. This last step is illustrated at Figure \ref{tikz_triangulation}.
\end{itemize}

\begin{center}
\begin{figure} 
\begin{tikzpicture}[line cap=round,line join=round,x=5cm,y=5cm] 
\clip(-0.8124846590133511,-0.36477869720517864) rectangle (1.2887017626879362,0.9119511688958221);

\draw [line width=1pt] (-0.2537759948061049,0.2067675022527642)-- (0,0.5);
\draw [line width=1pt] (0,0.5)-- (0.09823660325541984,0.12463122937174202);
\draw [line width=1pt] (0.09823660325541984,0.12463122937174202)-- (-0.2537759948061049,0.2067675022527642);

\draw [line width=0.5pt] (0.6145217470789895,0.552913223679929)-- (0,0.5);
\draw [line width=0.5pt] (0.09823660325541984,0.12463122937174202)-- (0.6145217470789895,0.552913223679929);
\draw [line width=0.5pt] (-0.3359122676871274,0.6643838797327447)-- (-0.2537759948061049,0.2067675022527642);
\draw [line width=0.5pt] (0,0.5)-- (-0.3359122676871274,0.6643838797327447);
\draw [line width=0.5pt] (-0.2537759948061049,0.2067675022527642)-- (-0.35937977422456235,-0.35645265464567344);
\draw [line width=0.5pt] (-0.35937977422456235,-0.35645265464567344)-- (0.09823660325541984,0.12463122937174202);

\draw [line width=0.4pt,dash pattern=on 1pt off 1pt] (0,0.5)-- (0,0.8990589451070937);
\draw [line width=0.4pt,dash pattern=on 1pt off 1pt] (0,0.8990589451070937)-- (0.6145217470789895,0.552913223679929);
\draw [line width=0.4pt,dash pattern=on 1pt off 1pt] (-0.3359122676871274,0.6643838797327447)-- (0,0.8990589451070937);
\draw [line width=0.4pt,dash pattern=on 1pt off 1pt] (0.09823660325541984,0.12463122937174202)-- (0.44438232468258587,-0.24498199859285766);
\draw [line width=0.4pt,dash pattern=on 1pt off 1pt] (0.44438232468258587,-0.24498199859285766)-- (0.6145217470789895,0.552913223679929);
\draw [line width=0.4pt,dash pattern=on 1pt off 1pt] (-0.35937977422456235,-0.35645265464567344)-- (0.44438232468258587,-0.24498199859285766);
\draw [line width=0.4pt,dash pattern=on 1pt off 1pt] (-0.3359122676871274,0.6643838797327447)-- (-0.7348598788235221,0.2302350087901991);
\draw [line width=0.4pt,dash pattern=on 1pt off 1pt] (-0.7348598788235221,0.2302350087901991)-- (-0.2537759948061049,0.2067675022527642);
\draw [line width=0.4pt,dash pattern=on 1pt off 1pt] (-0.7348598788235221,0.2302350087901991)-- (-0.35937977422456235,-0.35645265464567344);

\begin{scriptsize}
\draw [color=uuuuuu] (-0.049586650369565756,0.28648384612272554)-- ++(-1.5pt,-1.5pt) -- ++(3pt,3pt) ++(-3pt,0) -- ++(3pt,-3pt);
\draw [color=uuuuuu] (0.3196042398849475,0.38310365535924157)-- ++(-1.5pt,-1.5pt) -- ++(3pt,3pt) ++(-3pt,0) -- ++(3pt,-3pt);
\draw [color=uuuuuu] (0.5054366127355339,0.15908654836127364)-- ++(-1.5pt,-1.5pt) -- ++(3pt,3pt) ++(-3pt,0) -- ++(3pt,-3pt);
\draw [color=uuuuuu] (0.03927786208228369,-0.277474821298121)-- ++(-1.5pt,-1.5pt) -- ++(3pt,3pt) ++(-3pt,0) -- ++(3pt,-3pt);
\draw [color=uuuuuu] (-0.50679612760209,-0.03730165561768766)-- ++(-1.5pt,-1.5pt) -- ++(3pt,3pt) ++(-3pt,0) -- ++(3pt,-3pt);
\draw [color=uuuuuu] (-0.48539732313102524,0.40137383603914245)-- ++(-1.5pt,-1.5pt) -- ++(3pt,3pt) ++(-3pt,0) -- ++(3pt,-3pt);
\draw [color=uuuuuu] (-0.11053519177975404,0.6995294725535468)-- ++(-1.5pt,-1.5pt) -- ++(3pt,3pt) ++(-3pt,0) -- ++(3pt,-3pt);
\draw [color=uuuuuu] (0.2923584831271245,0.6995294725535469)-- ++(-1.5pt,-1.5pt) -- ++(3pt,3pt) ++(-3pt,0) -- ++(3pt,-3pt);
\draw [color=uuuuuu] (-0.23439558146699643,0.4464254306967888)-- ++(-1.5pt,-1.5pt) -- ++(3pt,3pt) ++(-3pt,0) -- ++(3pt,-3pt);
\draw [color=uuuuuu] (-0.14113430549293735,-0.10586324726315403)-- ++(-1.5pt,-1.5pt) -- ++(3pt,3pt) ++(-3pt,0) -- ++(3pt,-3pt);
\end{scriptsize}

\draw [<->] (1,0.8) -- (1,0.7) -- (1.1,0.7);
\draw (1.1,0.7) node[below] {$x_1$} ;
\draw (1,0.8) node[below left] {$x_2$} ;

\draw[->,>=latex] (0.75,0) to[bend right] (0.75,0.5);

\draw (-0.049586650369565756,0.28648384612272554) node[below right] {$n$} ;
\draw (-0.50679612760209,-0.03730165561768766) node[below right] {$\theta_0$} ;
\draw (-0.14113430549293735,-0.10586324726315403) node[above left] {$\theta_1$} ;
\draw (0.03927786208228369,-0.277474821298121) node[above] {$\theta_2$} ;
\draw (0.5054366127355339,0.15908654836127364) node[below left] {$\theta_3$} ;
\draw (0.3196042398849475,0.38310365535924157) node[left] {$\theta_4$} ;
\draw (0.2923584831271245,0.6995294725535469) node[left] {$\theta_5$} ;
\draw (-0.11053519177975404,0.6995294725535468) node[above] {$\theta_6$} ;
\draw (-0.23439558146699643,0.4464254306967888) node[above] {$\theta_7$} ;
\draw (-0.48539732313102524,0.40137383603914245) node[below] {$\theta_8$} ;

\end{tikzpicture}
\caption{The set $\mathbb{S}_{1}(n)$ with angles $\theta_0$ to $\theta_8$.}
\label{tikz_triangulation}
\end{figure}
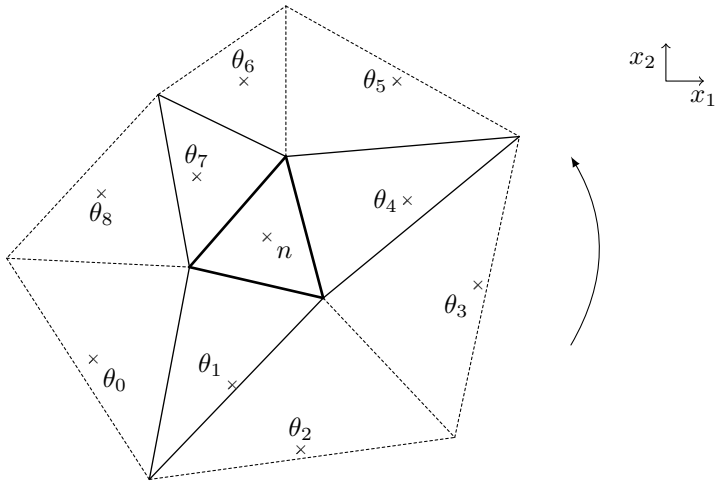
\end{center}

\subsection{Decomposition of the dynamics in the eigenbasis of the linearized operator} \label{subsection_decompo}
As we inject energy in our quantum system through the rotational potential modulation $V_{\rot}$, we are interested in how this energy is distributed along radial and azimutal modes, in particular during vortex nucleation. Note that in a turbulent regime, one should expect a cascade of energy (towards low or high frequencies according to the dimension), inducing a scaling distribution of the occupation of the different momentum modes.

In order to compute these modes, we consider the stationary linear part of equation \eqref{GP}, namely the linear operator
\[ H=-\frac{1}{2m} \Delta + V_{\pot}.  \]
From classical Sturm-Liouville theory, there exists an increasing sequence of eigenvalues associated to normalized eigenfunctions of $H$ which form an orthonormal basis of $L^2(\Omega)$. From the radial symmetry of the Gaussian potential $V$ and Fourier decomposition, we infer that these eigenfunctions can be written in polar coordinates under the form
\begin{equation} \label{polar_decomposition_eigenfunctions}
\phi_{p,l}(r,\theta)=\varphi_p(r) e^{i \ell \theta}, \quad p \in \N, \ \ell \in \Z, 
\end{equation}  
where the $(\varphi_p)_{p \in \N}$ are solutions of the one-dimensional Sturm-Liouville problem with Dirichlet boundary conditions
\begin{equation} \label{one_dimensional_gaussian_potential_eq}
\left\{ 
\begin{aligned}
& -\frac{1}{2m} \left( \partial^2_r \varphi_p(r) + \frac{1}{r} \partial_r \varphi_p(r) -\frac{\ell^2}{r^2} \varphi_p(r) \right)  - V_0 e^{-2m(r-1)^2} \varphi_p(r)= \lambda_{p,l}\varphi_p(r)  \\
& \varphi_p(r_{\min})=\varphi_p(r_{\max})=0,
\end{aligned}
\right.
\end{equation}
for all $r \in (r_{\min},r_{\max})$. Fixing two integers $P,L \in \N$, in order to numerically approach the first eigenfunctions $(\phi_{p,l})_{0 \leq p \leq P, -L \leq \ell \leq L}$, we proceed as follow:
\begin{itemize}
	\item We discretize by standard finite differences with Dirichlet conditions the left hand side of equation \eqref{one_dimensional_gaussian_potential_eq}, with fixed $\ell \in \left\{ -L, \ldots , L \right\}$. More precisely, we fix a number of points $\mathbf{n} \geq 2$, and we denote by $\mathbf{r}=( \mathbf{r}_1, \ldots, \mathbf{r}_{\mathbf{n}})^{\top}$ the discrete vector defined by
\[ \mathbf{r}_k = r_{\min}+(r_{\max}-r_{\min})\frac{k}{\mathbf{n}}, \quad k=0,\ldots,\mathbf{n},  \]
where the $(\mathbf{r}_k)_k$ are equidistants points at a range $\mathbf{h}=\frac{r_{\max}-r_{\min}}{\mathbf{n}}$. Denoting $\mathcal{R}=\diag(\mathbf{r}_0,\ldots,\mathbf{r}_\mathbf{n})$, we then define the matrix $\mathcal{H} \in \mathcal{M}_{\mathbf{n}+1}(\C)$ by
\[ \mathcal{H} = -\frac{1}{2m} \left( \mathcal{L} + \mathcal{R}^{-1} \times \mathcal{D} - \ell^2 \mathcal{R}^{-2} \right) -V_0 \diag \left( e^{-2m(\mathbf{r}_0-1)^2},\ldots, e^{-2m(\mathbf{r}_{\mathbf{n}}-1)^2} \right), \]
where the discrete laplacian matrix $\mathcal{L}$ and the first order centered finite difference matrix $\mathcal{D}$, with Dirichlet conditions, are given by the formulas
\[ \mathcal{L} = -\frac{1}{\mathbf{h}^2} \left( \begin{array}{ccccc}
													 0 & \ldots & 0 & &  (0) \\
													-1 & 2 & -1 & &   \\
													 & \ddots & \ddots &\ddots &    \\
														& & -1 & 2 & -1 \\
													(0) & & 0 & \ldots & 0 
												\end{array} \right)
	\ \text{and} \ 											
\mathcal{D} = \frac{1}{2\mathbf{h}} \left(  \begin{array}{ccccc}
													 0 & \ldots & 0 & &  (0) \\
													-1 & 0 & 1 & &   \\
													 & \ddots & \ddots &\ddots &    \\
														& & -1 & 0 & 1 \\
													(0) & & 0 & \ldots & 0 
												\end{array} \right)	.		 \]

	\item We use the function \textbf{eigs} from \textsc{GNU Octave} in order the get the first $P+1$ eigenvectors of the matrix $\mathcal{H}$ (note here that we obviously need that $P+1 \leq \mathbf{n}$, which will not be an issue as we will take $\mathbf{n}=500$ in our computation, and only seek for the few firsts eigenvectors). At this stage, we have $P+1$ vectors $\boldsymbol\varphi_0,\ldots,\boldsymbol\varphi_P \in \C^{\mathbf{n}+1}$.
	\item We then use the function \textbf{interp1} from \textsc{GNU Octave} to interpolate on the circumcenters of the triangles of the triangulation $\mathcal{T}$ using equation \eqref{polar_decomposition_eigenfunctions}, so we get $(P+1)\times (2L+1)$ vectors $(\Phi_{p,\ell})_{0 \leq p \leq P, -L \leq \ell \leq L}$ of size $N$, which we normalize on $L^2(\mathcal{T})$, namely for all $p,\ell,$
	\[ \| \Phi_{p,\ell} \|_{L^2(\mathcal{T})}^2 = \sum_{K \in \mathcal{T}} |\Phi_{p,\ell}(K)|^2 |K|=1.  \]
\end{itemize}

Note that the vectors $(\Phi_{p,\ell})_{p,\ell}$ are quasi-orthogonal for the $L^2(\mathcal T)$-scalar product, in the sense that $\langle \Phi_{p,\ell}, \Phi_{p',\ell'} \rangle \simeq \delta_{p,p'} \delta_{\ell,\ell'}$. For instance, from the numerical point of view, taking the same set of parameters as in the forthcoming Section \ref{simulations_section}, these quantities are around $10^{-4}$ in absolute value for  $\Phi_{p,\ell} \neq \Phi_{p',\ell'}$. Hence for a vector $U \in \C^{N}$, we call its decomposition on the set $(\Phi_{p,\ell})_{p,\ell}$ the quantity
\[ \sum_{p =0}^P \sum_{\ell =-L}^L \langle U, \Phi_{p,\ell} \rangle_{\mathcal{T}}  \Phi_{p,\ell}. \]
We also emphasize that in the following numerical simulations, we will have $N \gg(P+1)\times(2L+1)$, as the number of triangles $N$ will be somewhat between $10^4$ and $10^5$, whereas $(P+1)\times(2L+1)$ remains between $10^2$ and $10^3$.

\section{Vorticity and pseudo-vorticity}
In order to numerically track vortices nucleation, one may also rely on particular physical quantities related to the evolution of the wave function $\psi$. A strong candidate would naturally be the vorticity $\omega = \nabla \times v$ of the quantum fluid, taking the scalar cross product of the velocity
\[v=\frac{1}{2i} \frac{ (\overline{\psi}\nabla \psi -\psi \nabla \overline{\psi})}{|\psi|^2}, \]
of the wave function $\psi$, so that $\omega : \R_+ \times \R^2 \mapsto \R$. The vorticity of the quantum fluid is known to be represented by a distribution of Dirac's $\delta$-functions at the core of each vortices, making this quantity very appealing for the numerical detection of vortices throughout the dynamics. However, such distributions leads to high gradients which may be very  unstable to approximate, so one has to adapt this strategy in order to perform reliable numerical simulations. In the following we introduce two vorticity-based quantities, namely the \textit{regularized vorticity} and the \textit{pseudo-vorticity}, which are used in the quantum turbulence literature (see for instance \cite{brachet2023} or \cite{krstulovic2016}), and we briefly compare them with the post-processing algorithm of Section \ref{subsection_vortex_detection_post}.

\subsection{Regularized vorticity}  \label{subsection_vortex_reg_vorticity}
 In order to regularize the vorticity , one can fix a saturation parameter $\delta >0$, so that we define the regularized velocity
\[ v_{\delta}= \frac{1}{2i} \frac{( \overline{\psi}\nabla \psi -\psi \nabla \overline{\psi})}{|\psi|^2+\delta}  \]
which leads to the regularized vorticity
\[ \omega_{\delta} = \nabla \times v_{\delta}.   \]
 In order to numerically compute the regularized vorticity, we first compute $\nabla \psi$ as for $\nabla \phi$ which provides the discrete velocity, and we then rely on the work of Delcourte, Domelevo and Omnes \cite{delcourte2007} to compute the discrete scalar cross product on an admissible triangulation based on DDFV schemes, which can be describes as follows: for a triangle $K \in \mathcal{T}$ we approximate 
\[ \omega_{\delta}(K)=\frac{1}{|K|} \int_{K} (\nabla \times v_{\delta}) \simeq \frac{1}{|K|} \sum_{\sigma \in \mathcal{E}_{K}} |\sigma| v_{\delta}(K) \cdot \tau_{\sigma}, \]
where $\tau_{\sigma}$ is the normalized vector such that $\tau_{\sigma}$ and $n_{\sigma}$ are orthonormal positively oriented bases of $\R^2$. We describe the discretization of the gradient appearing in the expression of $v_{\delta}$ in the next section.

\subsection{Pseudo-vorticity} \label{subsection_vortex_pseudo_vorticity}
We also consider another quantity called the pseudo-vorticity, which was recently analyzed by the first author in \cite{chauleur2024FVGP}: denoting the density current $J=\frac{1}{2i} ( \overline{\psi}\nabla \psi -\psi \nabla \overline{\psi} )$, we define 
\[ \omega_{\ps}=\frac12 \nabla \times J = \nabla (\Re \psi) \times \nabla (\Im \psi).   \]
This last formulation will be use in order to numerically compute this quantity, relying on a piecewise constant approximation of the gradient of $\psi$ introduced in \cite{eymard2006}, and which relies on the use of \textit{diamond points} of the triangulation, namely the point $x_{\sigma}$ defined at the middle of each edges $\sigma \in \mathcal{E}_{K}$. Then, on each triangle $K \in \mathcal{T}$, we approximate
\[ \frac{1}{|K|} \int_{K} \nabla \psi \simeq \frac{1}{|K|} \sum_{\sigma \in \mathcal{E}_{K}} d\psi_{K,\sigma} (x_{\sigma}-x_{K}),   \]
where
\begin{equation*} 
d\psi_{K,\sigma} =   \left\{
      \begin{aligned}
        & \tau_{\sigma} (\psi_L-\psi_{K}), &\quad \ \text{if} \ \sigma=K|L \in \mathcal{E}_{\interior},\\
        & -\tau_{\sigma} \psi_{K}, &\quad \ \text{if} \ \sigma \in \mathcal{E}_{\exterior},
      \end{aligned}
    \right.
\end{equation*}
and
\begin{equation*} 
  \left\{
      \begin{aligned}
        & \tau_{\sigma} =\frac{|\sigma|}{|x_{K}-x_L|}, &\quad \ \text{if} \ \sigma=\kappa|L \in \mathcal{E}_{\interior},\\
        & \tau_{\sigma} =\frac{|\sigma|}{d(x_{K},\partial \Omega)}, &\quad \ \text{if} \ \sigma \in \mathcal{E}_{\exterior}.
      \end{aligned}
    \right.
\end{equation*}

This approximation of the gradient of $\psi$ on the triangulation $\mathcal{T}$ will also be use in order to compute the velocity $v$ appearing in the definition of the vorticity.

\subsection{Detection of vortices}
Both the regularized vorticity and the pseudo-vorticity appear to be smooth functions which have a local extremum at each vortex core and are small outside vortices. One has then to put a threshold in order to detect these local extremum and to perform a root-finding routine (typically a Newton method) in the neighborhood of the vortex in order to recover their exact position.
Note that the sign of the local extremum determines the sign of the charge of the vortex. We also emphasize that the pseudo-vorticity only requires the computation of one discrete gradient, instead of two with for the regularized vorticity, which makes it a bit more appealing from the computational efficiency point of view. A more precise comparison of the computation of these two quantities with the post-processing algorithm described in Section \ref{subsection_vortex_detection_post} will be made in the following section. On the other hand, this algorithm return the precise indice of each vortices, and not only their sign, even if it is now a common known feature that stable vortices can only have $\pm 1$ indices.

\section{Numerical simulations} \label{simulations_section}

\subsection{Accuracy tests} This first section is devoted to numerically show the different orders of convergence theoretically announced in the previous sections, as well as the discrete ground state stability. We perform all incoming simulations (if not mentioned otherwise) with the set of parameters $m=10$, $V_0=100$, $\gamma=100$, $r_{\min}=0.6$ and $r_{\max}=1.4$, on a triangulation $\mathcal{T}$ composed of $N=39852$ triangles generated via the arguments given in Section \ref{triangulation_section} with a stepsize control parameter $h =0.03$, which leads respectively to a number of circles and a number of point per circle $N_c=41$ and $N_p=486$ via Proposition \ref{proposition_admissible_triangulation}. 

\subsubsection{Space order accuracy} 
Eigenfuntions of the Laplace operator $-\Delta$ on the ring $\Omega$ with Dirichlet conditions are explicitly known in terms of Bessel functions of the first kind $J_{\alpha}$ and of the second kind $Y_{\alpha}$ \cite[Section 3.2]{grebenkov2013}. In polar coordinates, these eigenfunctions 
\begin{equation} \label{eigenfunctions_laplacian_ring}
u_{\alpha,\beta,\sigma}(r,\theta)=\left[ J_{\alpha} \left(r \sqrt{\lambda_{\alpha,\beta}}  \right) +c_{\alpha,\beta} Y_{\alpha} \left( r \sqrt{\lambda_{\alpha,\beta}} \right) \right] \times \left\{ \begin{aligned}
      &  \cos(\alpha \theta), \quad \sigma=1, \\
       & \sin(\alpha \theta), \quad \sigma=2 \ (\alpha\neq 0),
      \end{aligned}
    \right.
\end{equation}
are enumerated with respect to the triplet 
\[(\alpha,\beta,\sigma) \in \N \times \N^* \times \left\{1,2 \right\},\]
and associated to the eigenvalues $(\lambda_{\alpha,\beta})_{\alpha,\beta}$ which are simple for $\alpha=0$ and twice degenerate for $\alpha>0$. The coefficients $(\lambda_{\alpha,\beta})_{\alpha,\beta}$ and $(c_{\alpha,\beta})_{\alpha,\beta}$ are determined by the Dirichlet boundary conditions at $r=r_{\min}$ and $r=r_{\max}$,
\[  \left\{  \begin{array}{cc}
				J_n \left(r_{\min} \sqrt{\lambda_{\alpha,\beta}}  \right), & Y_n \left(r_{\min} \sqrt{\lambda_{\alpha,\beta}}   \right) =0,\\
				J_n \left(r_{\max} \sqrt{\lambda_{\alpha,\beta}}  \right),  & Y_n \left(r_{\max} \sqrt{\lambda_{\alpha,\beta}}   \right)=0,
					\end{array} \right.  \]
so for all $\alpha \geq 0$ we look at the zeros of the function
\[ f_{\alpha}(x)=  \left|  \begin{array}{cc}
				J_{\alpha} \left(r_{\min} x \right) & Y_{\alpha} \left( r_{\min} x \right) \\
				J_{\alpha} \left(r_{\max} x \right)  & Y_{\alpha} \left(r_{\max} x \right)
					\end{array} \right|, \]
which gives the square root of the eigenvalues $(\sqrt{\lambda_{n,k}})_k$. We also easily get the coefficients $(c_{\alpha,\beta})_{\beta}$ by the direct computation
\[  c_{\alpha,\beta}=-\frac{J_{\alpha}(r_{\max}\sqrt{\lambda_{\alpha,\beta}})}{Y_{\alpha}(r_{\max}\sqrt{\lambda_{\alpha,\beta}})}.  \]
In order to corroborate the first order convergence in space of the Finite Volume method described in Section \ref{finite_volume_section}, we now focus on eigenfunctions for $\alpha=0$. Bessel functions of the first kind $J_{\alpha}$ and of the second kind $Y_{\alpha}$ are respectively implemented in \textsc{GNU Octave} via the functions \textbf{besselj} and \textbf{bessely}, and the first zeros of the function $f_{\alpha}$ are numerically approached using the function \textbf{fzero}.  Using the expression \eqref{eigenfunctions_laplacian_ring} for $\alpha=0$, $\beta \in \N^*$ and $\sigma=1$, this gives a vector $U_{0,\beta,1} \in \C^N$ on the triangulation $\mathcal{T}$ associated to the eigenvalues $\lambda_{0,\beta}$. As $A_{\mathcal{T}} \rightarrow -\Delta$ as $h\rightarrow 0$, we plot in Figure \ref{fig: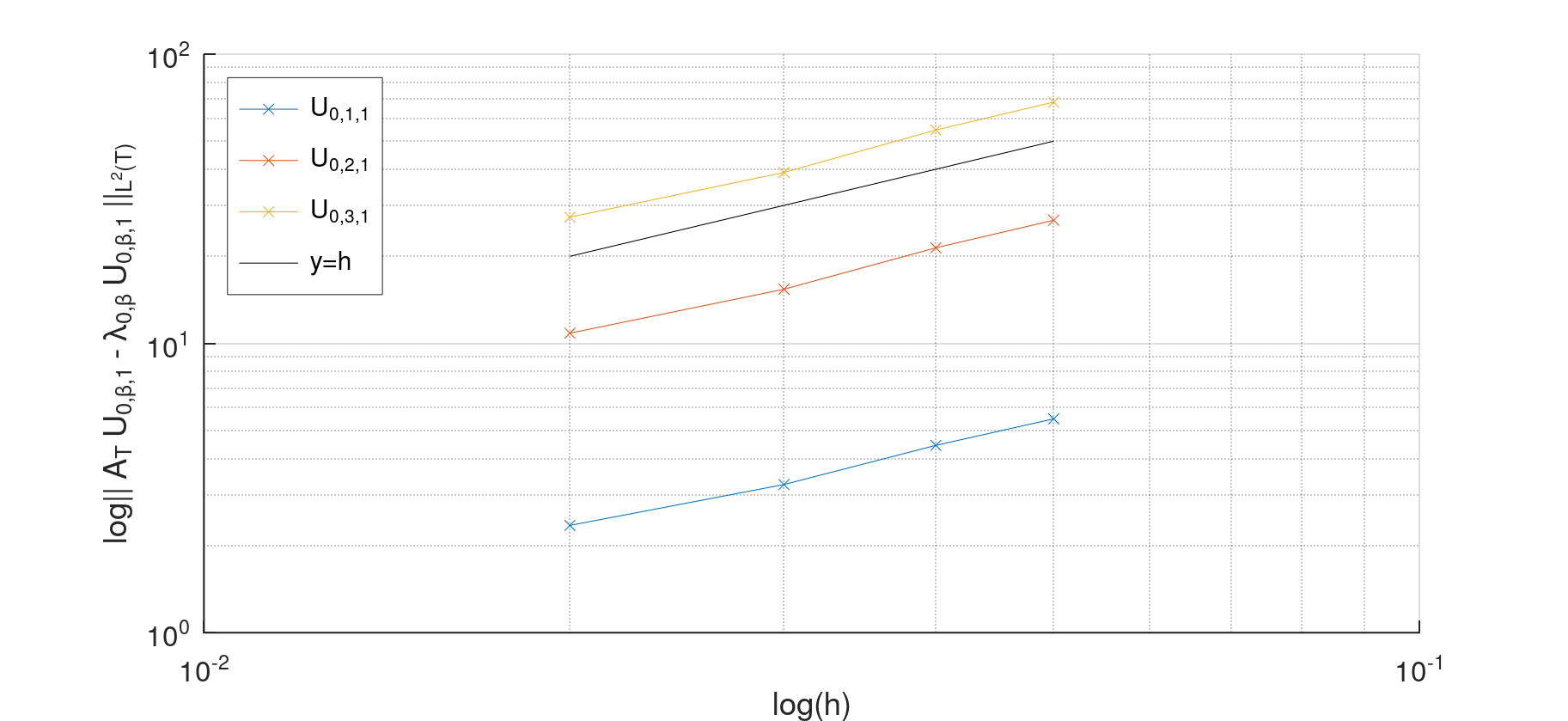} the evolution of the norms
\[ \|-A_{\mathcal{T}} U_{0,\beta,1} -\lambda_{0,\beta} U_{0,\beta,1} \|_{L^2(\mathcal{T})}  \]
as $h\rightarrow 0$, for $\beta=1,2,3$.

\begin{figure}[h]
	\centering
		\includegraphics[width=0.6\textwidth,trim = 0cm 0cm 0cm 0cm, clip]{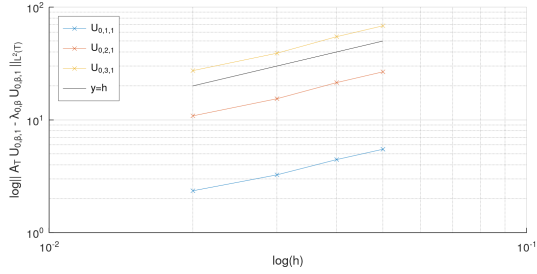}	
	\caption{Convergence of $A_{\mathcal{T}}$ on the triangulation $\mathcal{T}$ towards $\Delta$.}
	\label{fig:space_order_convergence_figure.png}
\end{figure}

\subsubsection{Time order accuracy}
To corroborate the second order convergence of the Strang splitting method, we now compute the dynamics of equation \eqref{GP} in the case without rotation $V_{\rot}=0$. We refer to equation \eqref{definition_strang_splitting} for the definition of the Strang splitting $\Phi^{\tau,t}$ in our setting, and we denote by $\mathbf{m}_{\Phi} \in \N$ the order of this splitting method, so that there exists a constant $\mathbf{e}>0$ such that
\[ \Phi^{T_{\max}}_{\tau} (\psi_0) = \psi(T_{\max},\cdot) + \mathbf{e} \tau^{\mathbf{m}_{\phi}} + \mathcal{O} \left( \tau^{\mathbf{m}_{\phi}+1} \right);  \]
where $\psi(T_{\max},\cdot)$ denotes the solution of equation \eqref{GP} at time $t=T_{\max}$ with initial condition $\psi(0,\cdot)=\psi_0$, and where we use the convention
\[ \Phi^{T_{\max}}_{\tau}= \Phi^{\tau,(j-1)\tau} \circ \ldots \circ  \Phi^{\tau,0}\]
as $T_{\max}=J \tau$. Denoting 
\begin{equation*}
\left\{
\begin{aligned}
& y_{\tau}=\Phi_{\tau}^{T_{\max}}(\psi_0)= \psi(T_{\max}) +\mathbf{e} \tau^{\mathbf{m}_{\phi}} + \mathcal{O}\left(\tau^{\mathbf{m}_{\phi}+1} \right),\\
& y_{2\tau}=\Phi_{2\tau}^{T_{\max}}(\psi_0)= \psi(T_{\max}) +\mathbf{e}(2 \tau)^{\mathbf{m}_{\phi}} + \mathcal{O}\left(\tau^{\mathbf{m}+1} \right),\\
& y_{\frac{\tau}{2}}=\Phi_{\frac{\tau}{2}}^{T_{\max}}(\psi_0)= \psi(T_{\max}) +\mathbf{e}\left( \frac{\tau}{2}\right)^{\mathbf{m}_{\phi}} + \mathcal{O}\left(\tau^{\mathbf{m}+1} \right),
\end{aligned}
\right.
\end{equation*}
a classical estimate for $\mathbf{m}_{\phi}$ is
\[ \mathbf{m}_{\phi} = \log \left( \frac{\| y_{2\tau} - y_{\tau} \|_{L^2(\mathcal{T})}}{ \| y_{\tau} - y_{\frac{\tau}{2}} \|_{L^2(\mathcal{T})}}   \right)\log(2)^{-1} + \mathcal{O}\left(\tau \right)  \]
which follows from standard computations (see \cite{lubich2006}). In order to show that the Strang splitting defined by \eqref{definition_strang_splitting} is well of order two (so that $\mathbf{m}_{\phi}=2$), we perform several simulations of \eqref{GP} with $T_{\max}=0.1$ and $J=2^\mathbf{k}$ for varying $\mathbf{k}\in \N^*$ with respectively $\tau$, $2 \tau$ and $\frac{\tau}{2}$ for the computations of $y_{\tau}$, $y_{2\tau}$ and $y_{\frac{\tau}{2}}$, which we reports in the following table.

\begin{center}
\begin{tabular}{|c|cccccc|} 
  \hline
$\mathbf{k}$ & 5 & 6 & 7 & 8 & 9 & 10 \\
\hline
 $\mathbf{m}_{\phi}$ &  1.8237 &  1.9321 & 1.9861 & 2.0022 & 2.0038 & 2.0021 \\
 \hline   
\end{tabular}
\end{center}

\subsubsection{Numerical stability of the ground state}
 We perform the normalized gradient flow algorithm detailed in Section \ref{GS_section}, starting the descent gradient method from a unitary normalized vector with an imaginary time step $\kappa=1.10^{-2}$ and a threshold $\epsilon=5.10^{-3}$. We then compute the dynamics of equation \eqref{GP} via the Strang Splitting detailed in Section \ref{splitting_section}, starting from the approximation of the ground state $U_{\GS}$, over a maximum time $T_{\max}=3$, so we take $J=5000$ discretization points in time, with $V_{\rot}=0$. Note that the linear flow $\Phi_A^{\tau}$ is computed using a second order Padé approximant
 \[ e^{i\frac{\tau}{2m}A_{\mathcal{T}}} \simeq \left( I_N-i\frac{\tau}{4m}A_{\mathcal{T}} \right)^{-1}\left(I_N+i\frac{\tau}{4m}A_{\mathcal{T}} \right) ,   \]
 where $I_N \in \mathcal{M}_{N}(\C)$ denotes the identity matrix. Moreover, the matrix $A_{\mathcal{T}}$ is a sparse matrix, so that the resolution of the linear system
 \[ \left( I_N-i\frac{\tau}{4m}A_{\mathcal{T}} \right)X=B  \]
 can be efficiently precomputed using its sparse $LU$ decomposition with the \textsc{GNU Octave} function \textbf{lu}. Denoting $\Phi_{\tau}^{t}(\mathbf{\psi}_0)$ the vector solution at time $t$, we then plot the evolution of the error
 \[ \err_{\GS}(t)=\left\| |\Phi_{\tau}^{t}(U_{\GS})| -  U_{\GS} \right\|_{L^2(\mathcal{T})} \]
 in Figure \ref{fig: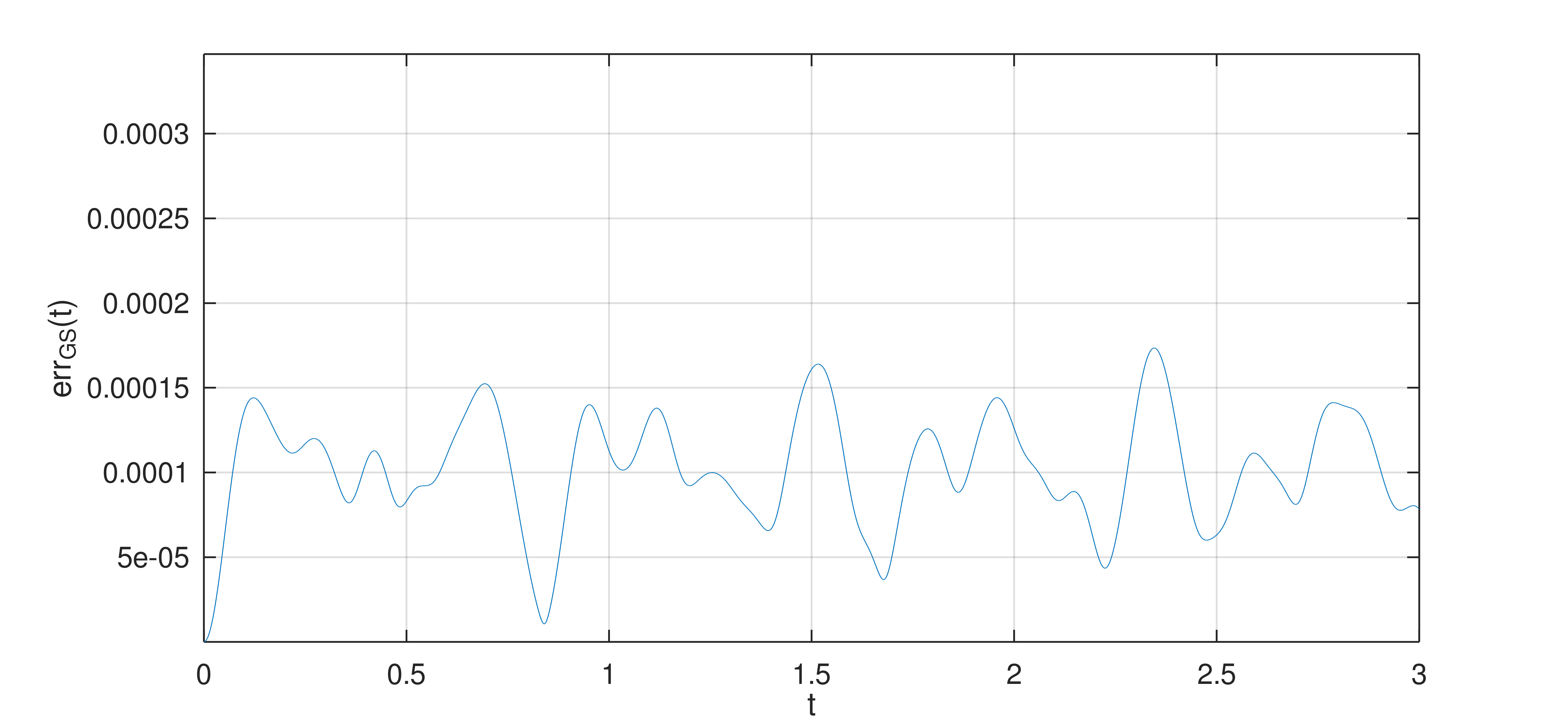}. We observe that this quantity is bounded by $2. 10^{-4}$, so the approximation of the ground state $\psi_0$ is particularly stable under the dynamic of $\Phi^{\tau}$.
 
\begin{figure}[h]
	\centering
		\includegraphics[width=0.6\textwidth,trim = 0cm 0cm 0cm 0cm, clip]{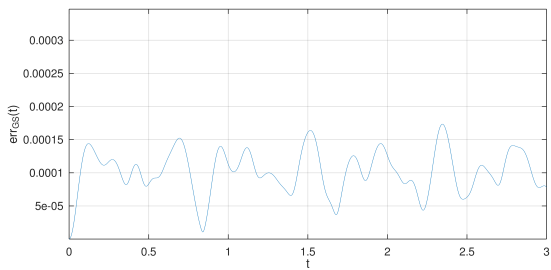}	
	\caption{Stability of the approximation of the ground state on $\left[0,T_{\max}\right]$.}
	\label{fig:evolution_ground_state.png}
\end{figure}

\subsection{Vortex nucleation}
We now compute the dynamics of the Gross-Pitaevskii equation \eqref{GP} with rotation and the set of parameters framed above, starting from the approximation of the ground state $U_{\GS}$ and adding this time the forcing potential $V_{\rot}$ defined by \eqref{forcing_potential_definition} with the set of parameters $ V_p=0.05$, $n_{\theta}=6$ and $\omega=10\pi/3$. We plot the square of the modulus of the solution vector $U(t)=\Phi_{\tau}^{t}(U_{\GS})$ for several times $t \in \left[0,T_{\max} \right]$. Throughout the dynamics, illustrated by Figure \ref{fig: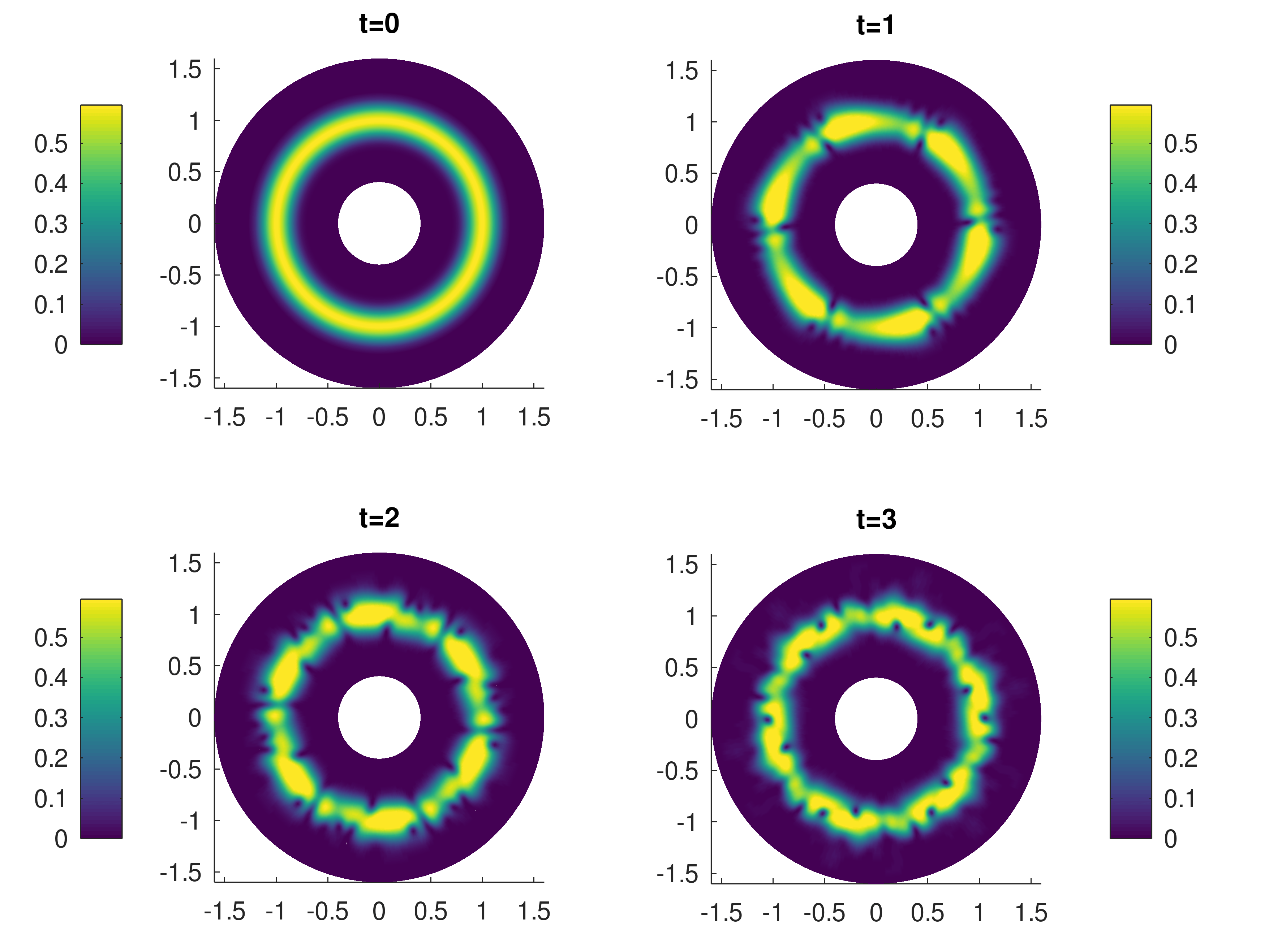}, we observe nucleation of vortices coming from the edge of the Bose-Einstein condensate. In particular, as time grows, more and more vortices do appear. Let's remark that no vortex appears if we perform the same simulations for the linear Schrödinger equation \eqref{GP} with $\gamma=0$. The same way, we do not observe vortex nucleation (at least over such period of time $\left[0,T_{\max} \right]$) throughout the dynamics for slower angular momentum such as $\omega=\pi$, as the velocity of the superfluid must pass the Landau criterion. 
\begin{figure}[h]
	\centering
		\includegraphics[width=0.9\textwidth,trim = 0cm 0cm 0cm 0cm, clip]{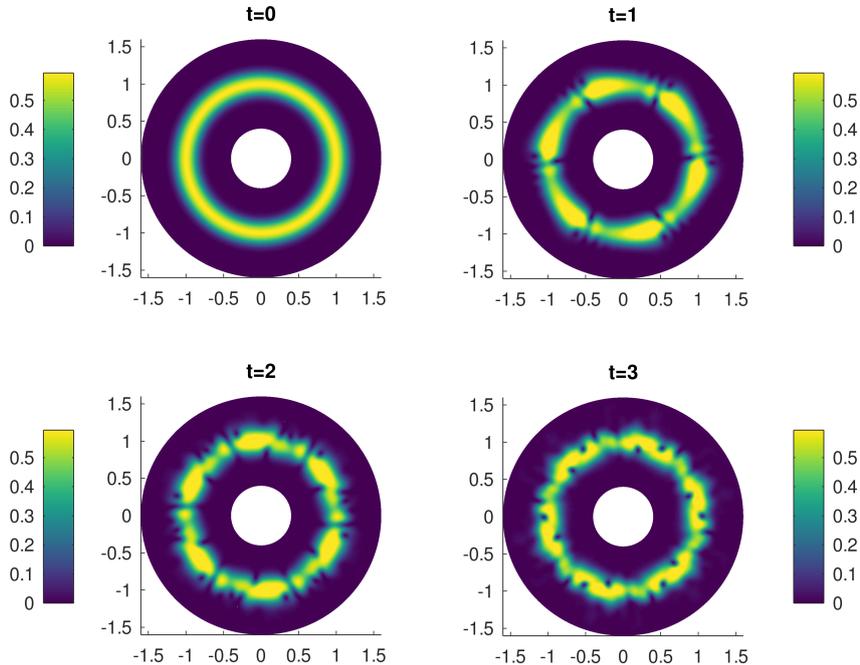}	
	\caption{The density $|U(t)|^2$ for times $t=0$, $1$, $2$ and $t=T_{\max}=3$.}
	\label{fig:short_time_dynamics.png}
\end{figure}

We now analyze the last frame (bottom right of Figure \ref{fig:short_time_dynamics.png}) at $t=T_{\max}=3$, plotting the results of the post-processing algorithm described in Section \ref{subsection_vortex_detection_post} with parameters $tol_1=0.1$, $tol_2=0.05$ and $\lambda_{\max}=10$, which numerically detects 12 vortices with 6 of indices 1 and 6 of indices -1 (we also display their respective characteristic length $\lambda_n$) which are plot in the first frame of Figure \ref{fig: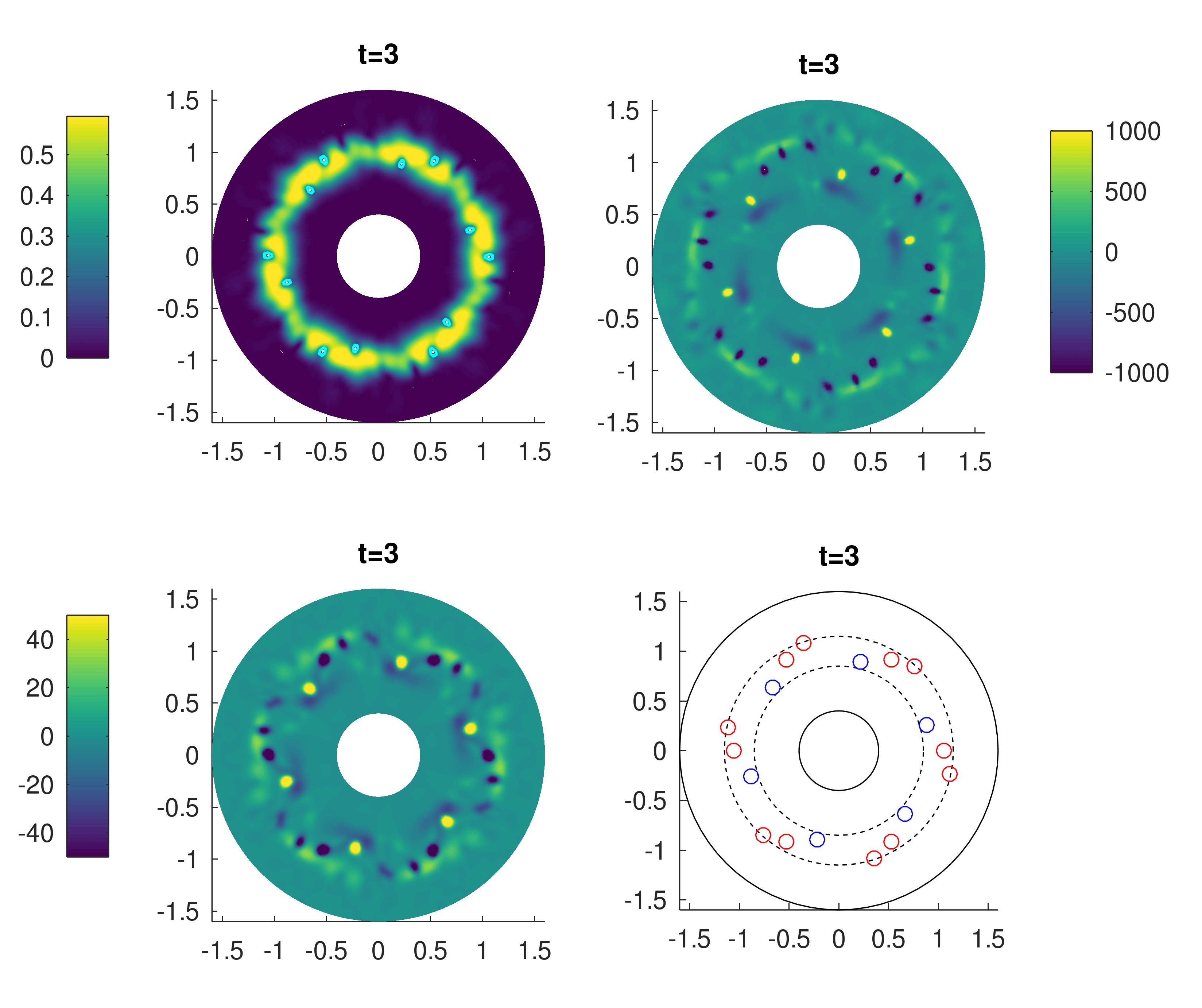}. We also plot the regularized vorticity from Section \ref{subsection_vortex_reg_vorticity} computed with regularization parameter $\delta=0.1$ in the second frame, as well as the pseudo-vorticity from Section \ref{subsection_vortex_pseudo_vorticity} with the same threshold in the third frame of Figure \ref{fig:post_processing_image.png}. Finally we plot the result of the vortex detection algorithm (with vortices of positive or negative indices displayed with different colors) based on the pseudo-vorticity fixing a threshold equal to $V_0/2$ at the fourth frame of Figure \ref{fig:post_processing_image.png}. 

One can make several comments from here. First, from a computation perspective, the first post-processing alogithm takes about 4630.7s on a common personal laptop computer, whereas the computation of the regularized vorticity and the vortex detection takes about 404.75s, and the pseudo-vorticity and detection about 117.94s. Then, these algorithms do not detect exactly the same number of vortices: the post-processing algorthim, which is based on the density function $|U(t)|^2$, has trouble detecting vortices at the edge of the condensate, whereas algorithm based on vorticity are able to track them more efficiently. It is worth noticing that these three algorthims highly depends on their respective parameters, and may require some tuning. Finally, we emphasize that vortices with negative indices nucleate from the inside of the ring, whereas vortices with positive indices, which are more numerous, nucleate from the outside (or the opposite if the potential $V$ is rotating the other way). They do not interact with each other, even in longer simulations, as negative vortices rotate at the inner edge of the condensate, and positive ones rotate at its outer edge.

\begin{figure}[h]
	\centering
		\includegraphics[width=0.9\textwidth,trim = 0cm 0cm 0cm 0cm, clip]{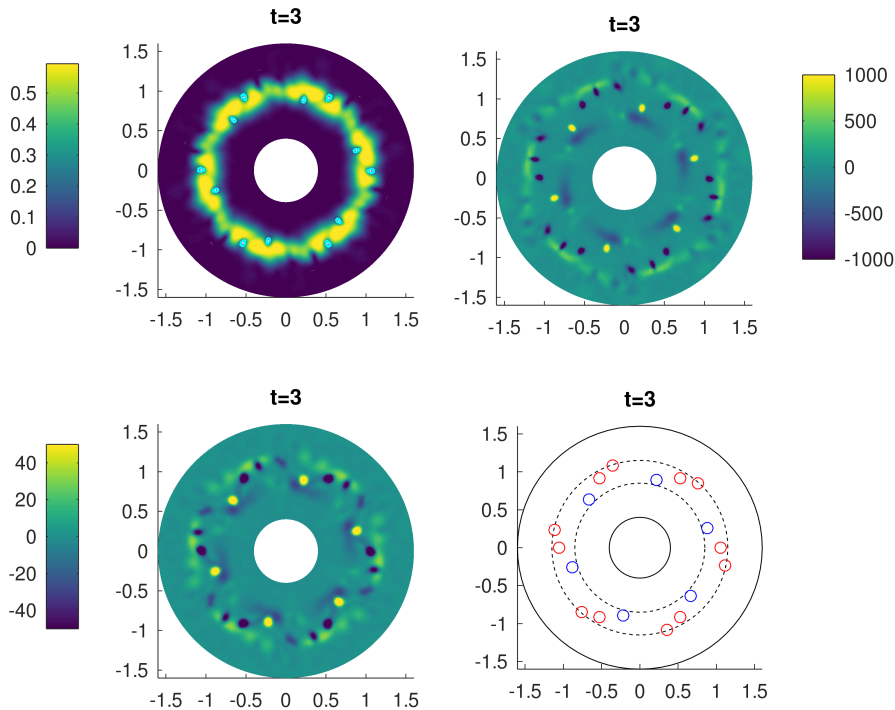}	
	\caption{\textit{(From left to right, top to bottom).} Post-processing algorithm, regularized vorticity, pseudo-vorticity and vortex detection algorithm from pseudo-vorticity of the wave function at time $t=T_{\max}$.}
	\label{fig:post_processing_image.png}
\end{figure}

We also plot in Figure \ref{fig: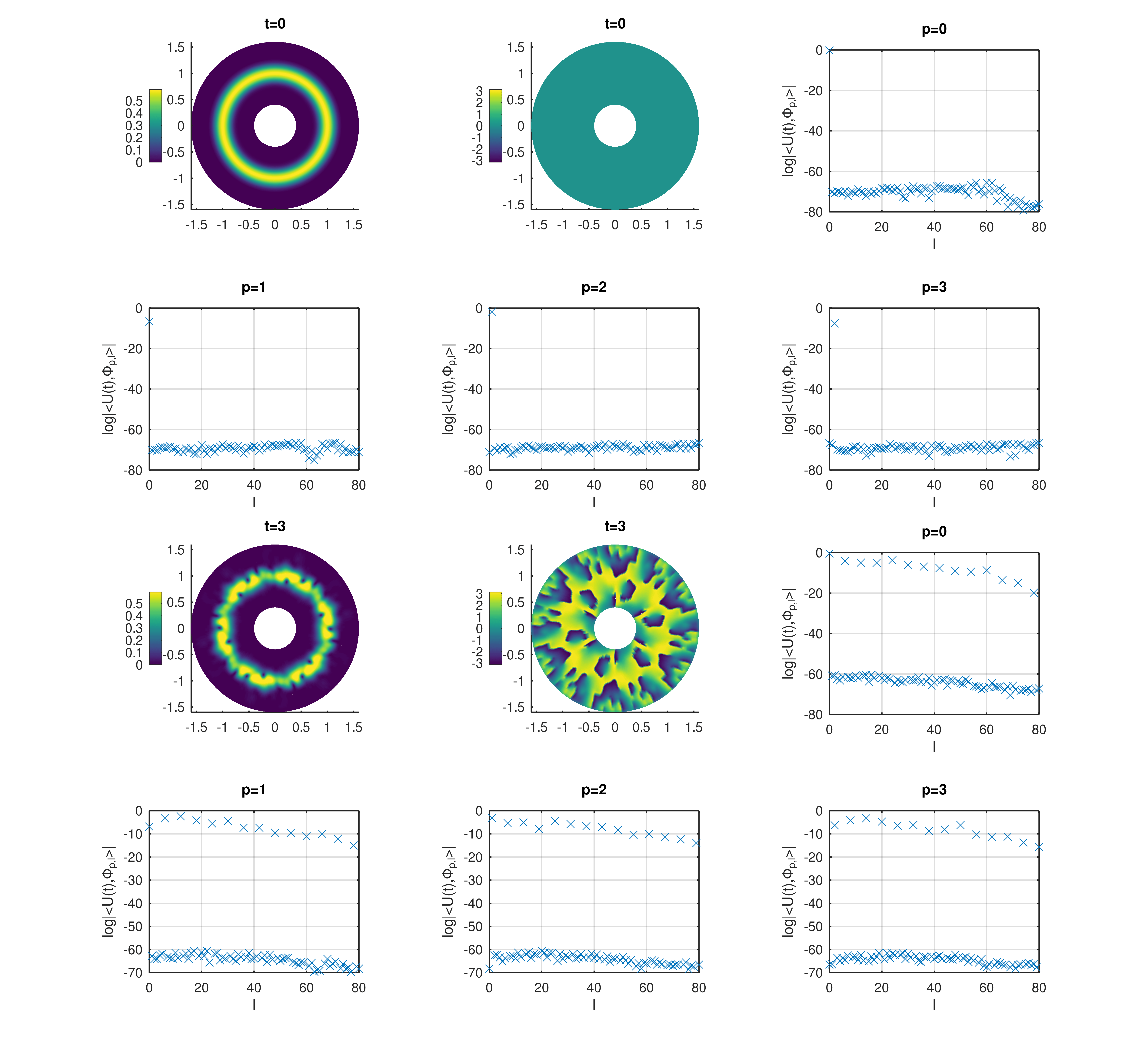} both the density and the phase of the wave function based on the formula $\arg (\psi) = -i \log \left( \psi/|\psi|  \right)$ at initial time $t=0$ and final time $t=T_{\max}=3$, as well as the decomposition of the wave function into the basis of the linear operator as describe in Section \ref{subsection_decompo} for first radial modes $p=0,\ldots 3$ and with maximal azimuthal modes $L=80$. We observe that at initial state, the ground state is mainly localized on even modes, whereas during the dynamics, the nonlinearity tends to propagate the energy through higher modes, in particular in odd modes when vortices nucleate.

\begin{figure}[h]
	\centering
		\includegraphics[width=1\textwidth,trim = 0cm 0cm 0cm 0cm]{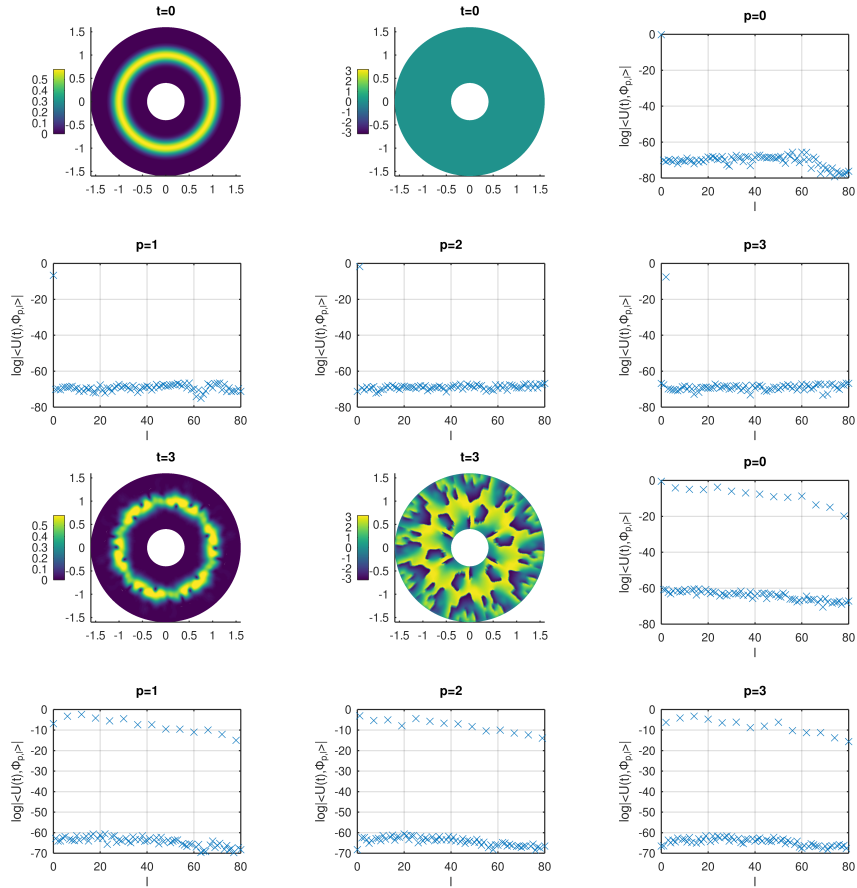}	
	\caption{\textit{(From left to right, top to bottom).} Density, phase and decomposition of the wave function on the first four radial modes (in logarithmic scale) at respectively time $t=0$ and time $t=T_{\max}$.}
	\label{fig:decomposition_2.png}
\end{figure}

\subsection{Other cases}

We also perform other numerical experiments on our ring geometry in order to highlight other known nonlinear phenomena in the context of quantum turbulence.

We now only consider the case without rotation, taking $V_{\rot}=0$, with the same parameters as above but with unstable initial condition. More precisely, we start our simulation with the initial state $\varphi(K)=U_{\GS}(K)$ if the triangle $K$ has a circumcenter $x_K$ with positive absciss $x_{K,1}>0$ , and $\varphi(K)=-U_{\GS}(K)$ otherwise (and we also regularize $\varphi$ by multiplying it with the function $x_K \mapsto \exp(-0.1/|x_{K,1}|)$). This creates an instability which implies the nucleation of 4 vortices at the outer edge of the condensate (see Figure \ref{fig: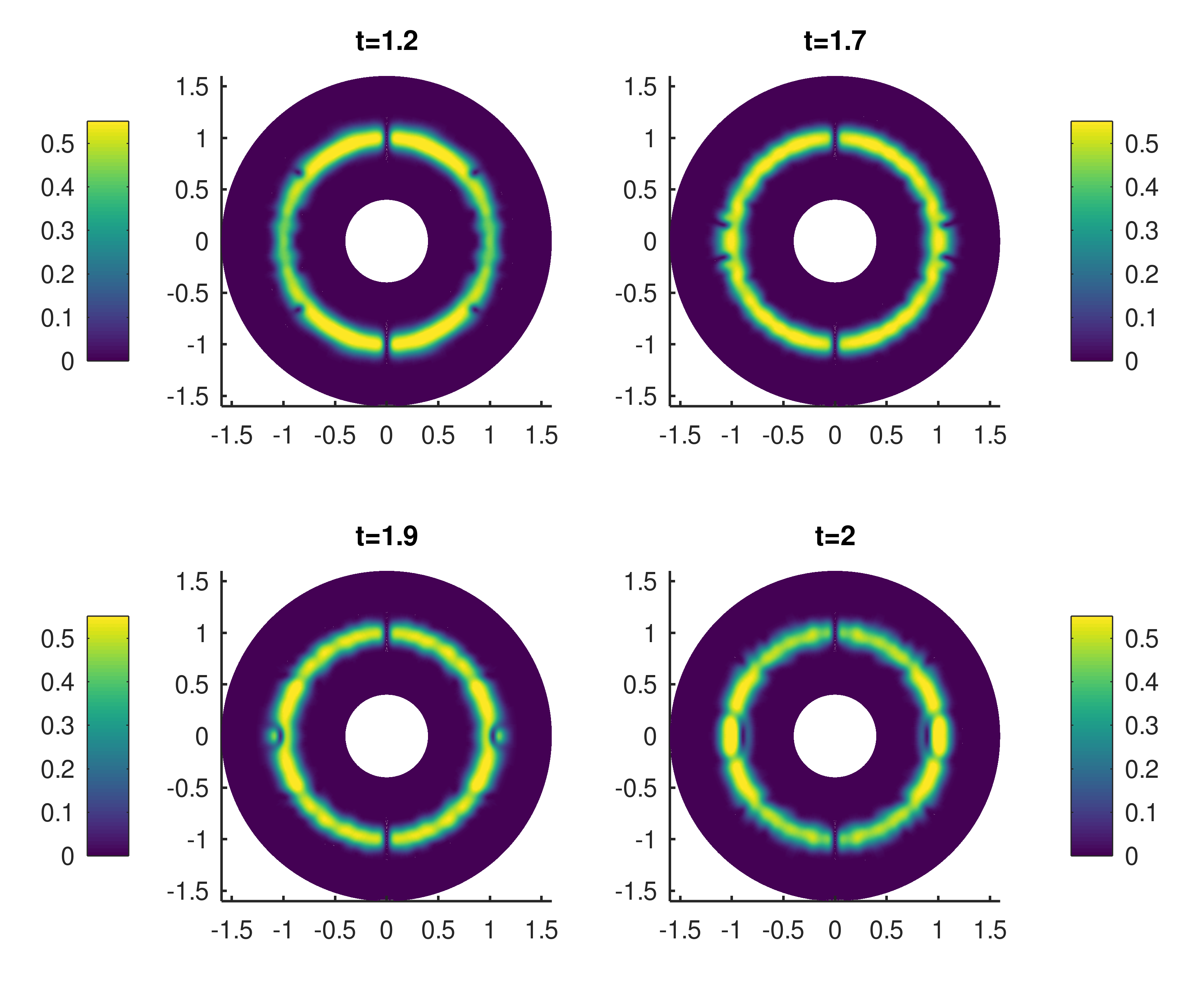}). When these vortices meets pairwise, they reconnect and vanish, which induces a dispersive wave throughout the condensate, also known as \textit{wave sound}. This wave emission is well known to play an important role for irreversible energy transfer mechanisms in quantum turbulent fluids \cite{proment2020}.

\begin{figure}[h]
	\centering
		\includegraphics[width=0.9\textwidth,trim = 0cm 0cm 0cm 0cm, clip]{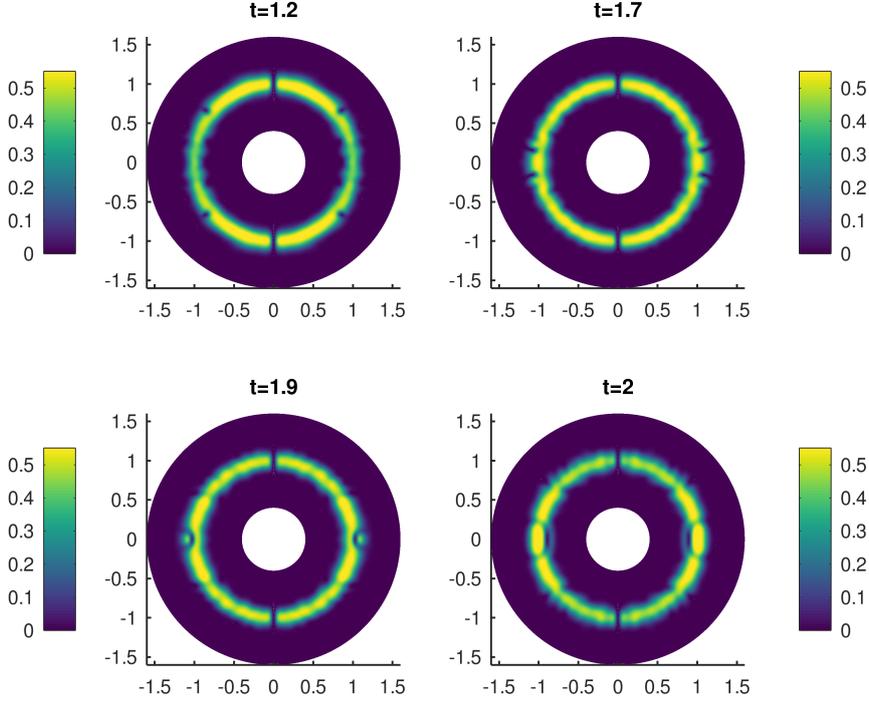}	
	\caption{\textit{(From left to right, top to bottom).} Density of the wave function for several times under transversal instability with Dirichlet boundary conditions at several times.}
	\label{fig:wave_sound.png}
\end{figure}

Finally, we mention that although all our theoretical results are based on Dirichlet boundary conditions, one can easily adapt our space discretization for Neumann boundary conditions (see for instance \cite{eymard2000}), which allows to perform numerical experiments where density does not vanish at the boundary of the ring domain. In Figure \ref{fig: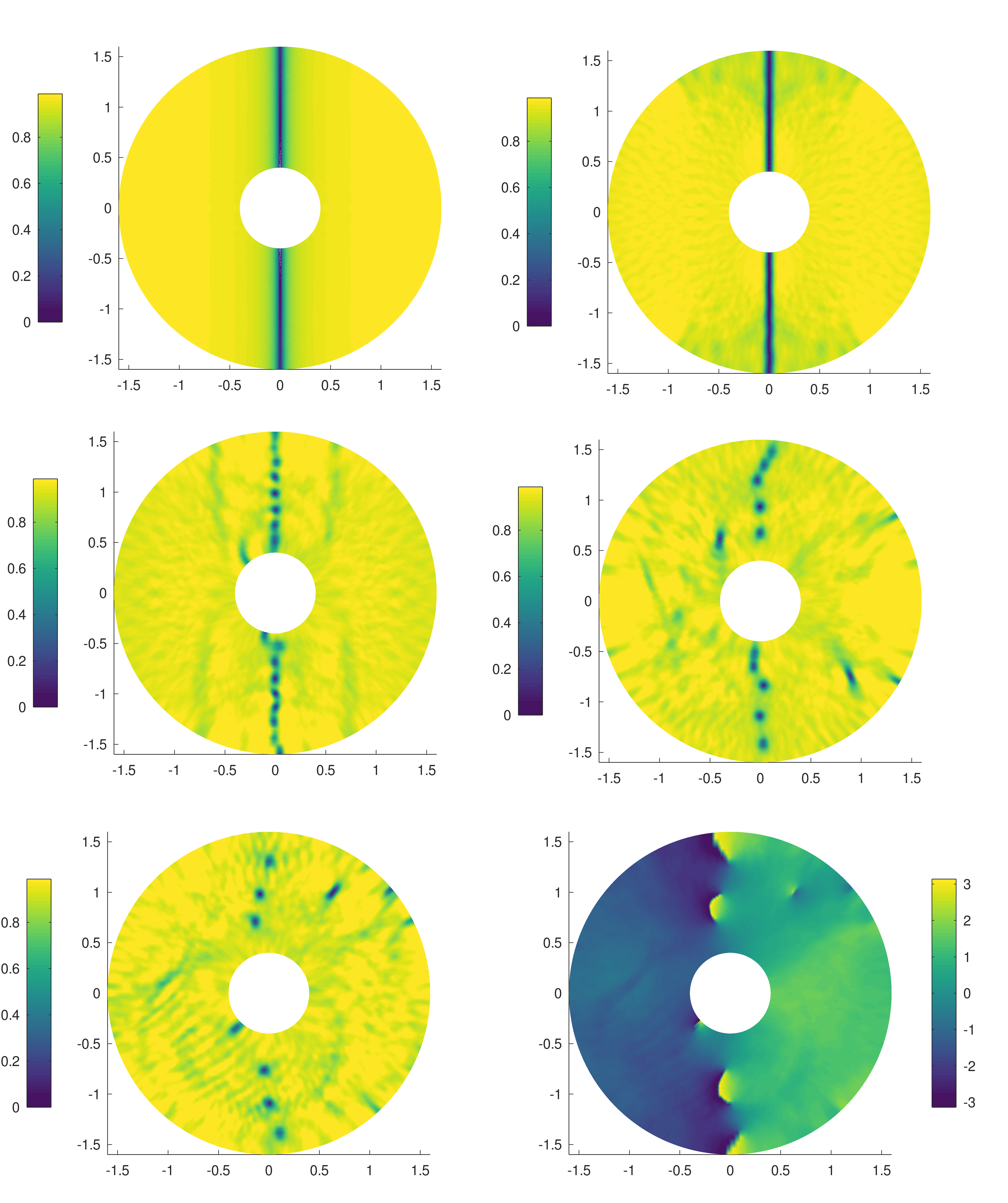} we reproduce the same numerical simulations as in Figure \ref{fig:wave_sound.png} described above, except that this time we take $V=0$ so that the ground state is a constant function over our geometry. The unstable initial state leads after some time to the nucleation of several vortices, which are briefly aligned in a snake-like transverse deformation which eventually breaks into vortex-antivortex pairs, a phenomenon which is highlighted in the work \cite{ricardo2010} for an harmonic trap. 

\begin{figure}[h]
	\centering
		\includegraphics[width=0.65\textwidth,trim = 0cm 0cm 0cm 0cm, clip]{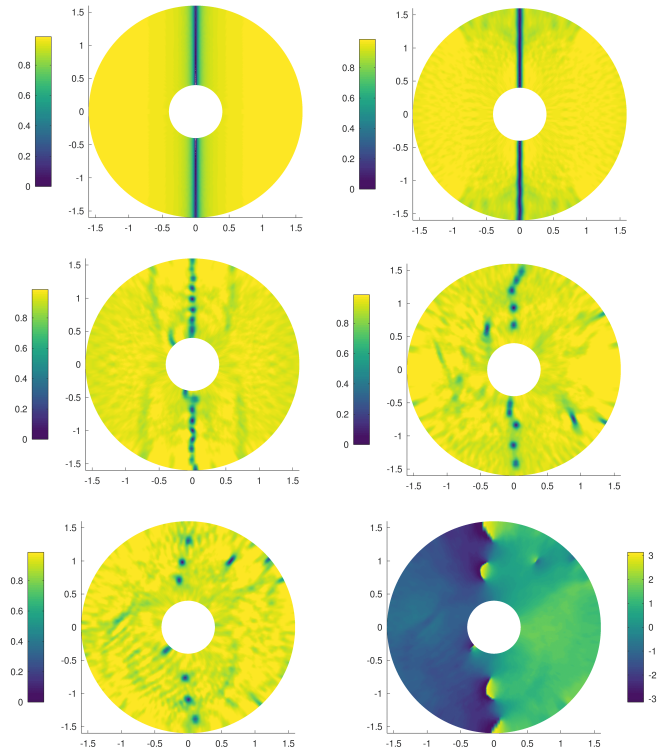}	
	\caption{\textit{(From left to right, top to bottom).} Density of the wave function for several times under transversal instability with Neumann boundary conditions at several times, and phase of the wave function at the last time.}
	\label{fig:Neumann.png}
\end{figure}

\subsection*{Acknowledgments} Q.C. and G.D. are supported by the Labex CEMPI (ANR-11-LABX-0007-01). The authors are grateful to Ricardo Carretero and Panayotis G. Kevrekidis for enlightening discussions about this work, as well as pointed out formulas for the regularized vorticity to efficiently track vortices (see Section \ref{subsection_vortex_reg_vorticity}). Q.C. is also thankful for numerous and valuable discussions at early stage of this work with participants of the workshop "Bridging Classical and Quantum Turbulence" which held at the Cargèse Scientific Institute (IESC) during July 2023, in particular to Francky Luddens.

\bibliographystyle{siam}
\bibliography{biblio}

\end{document}